\date{October 28, 2013}
\newtheorem{theorem}{Theorem}[section]
\newtheorem{lemma}[theorem]{Lemma}
\newtheorem{proposition}[theorem]{Proposition}
\newtheorem{definition}[theorem]{Definition}
\newcommand{\nset}[1]{\ensuremath{\{1,\dotsc,#1\}}}
\newcommand{\R}{\ensuremath{\mathbb R}}
\newcommand{\N}{\ensuremath{\mathbb N}}
\newcommand{\Z}{\ensuremath{\mathbb Z}}
\DeclareMathOperator*{\argmax}{arg\,max}
\newcommand{\card}[1]{\ensuremath{| {#1} |}}
\newcommand{\nnz}[1]{\ensuremath{\mbox{nnz}(#1)}}
\newcommand{\setcond}{\ensuremath{\mid}}
\newcommand{\bigo}[1]{\ensuremath{\mathcal{O}\left(#1\right)}}
\newcommand{\defby}{\mathrel{\mathop:}=}
\newcommand{\bydef}{=\mathrel{\mathop:}}
\newcommand{\flopop}[1] {\ensuremath{\text{flop}(#1)}}
\newcommand{\qcc}{\ensuremath{\text{qcc}}}
\newcommand{\quadcost}{\ensuremath{\text{q}}}
\newcommand{\lincost}{\ensuremath{\text{l}}}
\newcommand{\nhd}[2][{}]{\ensuremath{\mathcal{N}_{#1}\left(#2\right)}}
\newcommand{\cnhd}[2][{}]{\ensuremath{\mathcal{N}_{#1}\left[#2\right]}}
\newcommand{\vdeg}[2][{}]{\ensuremath{\mbox{d}_{#1}(#2)}}
\newcommand{\cvdeg}[2][{}]{\ensuremath{\mbox{d}_{#1}[#2]}}
\newcommand{\ecut}[1]{\ensuremath{\delta({#1})}}
\newcommand{\probname}[1]{{\textsc{#1}}}
\title{On the minimum FLOPs problem in the sparse Cholesky
factorization}
\author{Robert Luce\thanks{%
TU Berlin, Institut f\"ur Mathematik, MA 3-3, Stra{\ss}e des 17. Juni
136, 10623 Berlin, Germany, luce@math.tu-berlin.de}
\and Esmond Ng\thanks{%
Lawrence Berkeley National Laboratory, One Cyclotron Road, Mail Stop
50F-1650, Berkeley, CA 94720-8139, USA, EGNg@lbl.gov
}}
\begin{document}

\maketitle

\thispagestyle{fancy}
\setlength{\headheight}{15pt}
\chead{\scriptsize\revision}

\begin{abstract} Prior to computing the Cholesky factorization of a
sparse symmetric positive definite matrix, a reordering of the rows
and columns is computed so as to reduce both the number of fill
elements in Cholesky factor and the number of arithmetic operations
(FLOPs) in the numerical factorization. These two metrics are clearly
somehow related and yet it is suspected that these two problems are
different.  However, no rigorous theoretical treatment of the relation
of these two problems seems to have been given yet.  In this paper we
show by means of an explicit, scalable construction that the two
problems are different in a very strict sense: no ordering is optimal
for both fill and FLOPs in the constructed graph.

Further, it is commonly believed that minimizing the number of FLOPs
is no easier than minimizing the fill (in the complexity sense), but
so far no proof appears to be known.  We give a reduction chain that
shows the NP hardness of minimizing the number of arithmetic
operations in the Cholesky factorization.

\end{abstract}

\opt{siam}{%
\begin{keywords} 
sparse Cholesky factorization, minimum fill, minimum operation count,
computational complexity
\end{keywords}
}%

\opt{siam}{%
\begin{AMS}
65F50, 65F05, 68Q17
\end{AMS}
}%

\pagestyle{myheadings}
\thispagestyle{plain}
\markboth{R. LUCE AND E. NG}{MINIMUM FLOPS IN THE SPARSE CHOLESKY
FACTORIZATION}

\section{Introduction}

Let $A \in \R^{n \times n}$ be a sparse, real, symmetric positive
definite (SPD) matrix and consider the Cholesky factorization of $A$
with symmetric pivoting, that is, $PAP^T = LL^T$, where $L$ is a lower
triangular matrix and $P$ is a permutation matrix.  Assuming no
accidental cancellation, the nonzero pattern of $L+L^T$ depends solely
on the choice of $P$ and contains the nonzero pattern of $PAP^T$. Nonzero
elements of $L$ at positions that are structural zeros in $PAP^T$ are
called {\itshape fill} elements.  Determining a permutation matrix
$P$, such that the number of these fill elements is minimum, is an NP
hard problem~\cite{Yannakakis:1981}.  Since the arithmetic work in
terms of floating point operations for the computation of the Cholesky
factor $L$ is solely determined by the permutation matrix $P$ as well,
one may wonder how the number of fill elements and arithmetic work are
related.  In this paper we study this relationship and give an NP
hardness result for the minimization of the arithmetic work.

Gaussian elimination for symmetric matrices is very conveniently
described in terms of undirected graphs.  For example, the Cholesky
factorization of $A$ can be seen as an embedding of the graph $G(A)$
of $A$ into a triangulated supergraph $G^+$ of $G$.  In this work we assume
familiarity with some basic graph theoretic terminology and concepts such
as the elimination game, chordality and perfect elimination
orderings (PEOs). Useful references that cover all the terminology
we use are~\cite{Rose:1972} and~\cite{Heggernes:2006}.

Let $G = (V, E)$ be a simple undirected graph with $n$ vertices.  If
$F \subseteq V \times V \setminus E$ is a set of fill edges such that 
$G^{+} = (V, E \cup F)$ is chordal, then there exists a PEO
$\alpha : V \rightarrow \{1,\dotsc,n\}$ for $G^+$.
When carrying out
vertex elimination on $G^{+}$ according to $\alpha$, denote by
$\vdeg{\alpha^{-1}(i)}$ the degree of the $i$-th vertex in the course
of the elimination process (the \emph{elimination degree} of
$\alpha^{-1}(i))$.  Minimizing the quantity
\begin{equation*}
    \nnz{\alpha} = \sum_{i=1}^n (\vdeg{\alpha^{-1}(i)} + 1)
\end{equation*}
over all triangulations $G^+ = (V, E \cup F)$ is what we call the
\probname{MinimumFill} problem in this work (equivalently, one could
minimize $\card{F}$).  If $G$ is the graph of a sparse symmetric
positive definite matrix $A$, then $\nnz{\alpha}$ is the number of
nonzero elements in the Cholesky factor of $A$ when carrying out the
factorization in the ordering $\alpha$.

Another metric of interest is the number of floating point operations
(FLOPs) that are required for the computation of the Cholesky factor
in the given ordering $\alpha$.  If we account for all additive,
multiplicative and square-root operations for the computation of the
Cholesky factor, the total number of such FLOPs is given by
\begin{equation*}
    \flopop{\alpha} = \sum_{i=1}^n (\vdeg{\alpha^{-1}(i)} + 1)^2.
\end{equation*}
Minimizing
$\flopop{\alpha}$ over all triangulations of $G$ is the
\probname{MinimumFLOPs} problem.

It is important to note that the multiset of elimination degrees
$\{\vdeg{\alpha^{-1}(i)}\}_{i=1}^n$ is the same for all PEOs $\alpha$
of a triangulation~\cite[Thm. 4]{Rose:1972}.  Hence, the quantities
$\nnz{\cdot}$ and $\flopop{\cdot}$ depend only on the triangulation
$G^+$ (see also \cite{DuffReid:1983:work}).

The \probname{MinimumFLOPs} problem has received much less attention
in the literature than the \probname{MinimumFill} problem. It is also
occasionally noted that the two metrics are related (e.g. \cite[\S
7]{GeorgeLiu:1989}, \cite[ch. 59]{MehtaSahni:2005}) and it is
occasionally noted that the two problems are believed to be
different~(e.g. \cite[sec. 4.1.2]{Rose:1972}).  However, a rigorous
investigation of the relation of these two problems seems to be
missing in the literature.

In section \ref{sec:fill_vs_flops} we discuss a class of graphs,
parameterized by the number of vertices, for which all optimal
orderings with respect to either one metric are strictly suboptimal
for the other.  A third ordering problem to which we relate these
findings is the \probname{Treewidth} problem.  In the context of
multifrontal methods~\cite{DuffReid:1983:mf, Liu:1992}, this problem
asks for an elimination ordering such that the largest front size is
minimum~\cite{BodlaenderEtal:1995}.  It is also a parameter in the
lower bound for the amount of communication in the parallel sparse
Cholesky factorization, since it determines the size of a largest
dense submatrix that has to be factorized.  Finally, we briefly
discuss ordering heuristics from the viewpoint of the minimum FLOPs
problem.

In section \ref{sec:flops_np_hard} we give a formal NP hardness result
for \probname{MinimumFLOPs}.  While it is well known that minimizing
the fill is NP hard~\cite{Yannakakis:1981} and one expects that
minimizing the number of arithmetic operations is no less difficult,
it seems that such a proof has not been given before.

\subsection{Notation}
\label{sec:notation}

We use the following notation throughout this paper.  The Cartesian
product of two sets $P$ and $Q$ is denoted by $P \times Q$.  For two
graphs $G_1 = (V_1, E_1)$ and $G_2 = (V_2, E_2)$ we define their sum
$G_1 + G_2 \defby (V_1 \cup V_2, E_1 \cup E_2)$ and their join $G_1
\vee G_2 \defby (V_1 \cup V_2, E_1 \cup E_2 \cup (V_1 \times V_2))$.
By $K_s$ we refer to the complete graph (or clique) on $s$ vertices.
For a graph $G=(V,E)$ and a vertex $v \in V$, we denote by $\nhd[G]{v}
\subseteq V$ the neighborhood of $v$ in $G$, that is, the vertices
adjacent to $v$.  The closed neighborhood of $v$ is $\cnhd[G]{v}
\defby \nhd[G]{v} \cup \{v\}$.  Denote the vertex degree and the closed vertex
degree of $v$ by $\vdeg[G]{v} = \card{\nhd[G]{v}}$ and $\cvdeg[G]{v} =
\card{\cnhd[G]{v}}$, respectively.  We omit the reference to the graph
$G$ in the notation whenever the context permits.  For example, in the
context of vertex elimination, $\cvdeg{\alpha^{-1}(i)}$ always refers to the
$i$-th elimination degree.
Sometimes we explicitly refer to the
vertex and edge sets of a graph $G$ by $V(G)$ and $E(G)$.
Using this notation we
formally restate the two problems of interest as decision problems
(recall that $\vdeg{\alpha^{-1}(i)}$ refers to the {\itshape
elimination degree} and notice that $\vdeg{\alpha^{-1}(i)} + 1 =
\cvdeg{\alpha^{-1}(i)}$).

\begin{center}
\framebox{
\begin{minipage}{.8\textwidth}
\probname{MinimumFill}\\
Instance: Graph $G=(V,E), n = \card{V}, k\in \N$\\
Question: Is there a set of edges $F\subseteq V\times V$ such that $(V,
E\cup F)$ has a PEO $\alpha : V \rightarrow \nset{n}$ with
$\sum_{i=1}^n \cvdeg{\alpha^{-1}(i)} \le k$?
\end{minipage}
}
\end{center}

\begin{center}
\framebox{
\begin{minipage}{.8\textwidth}
\probname{MinimumFLOPs}\\
Instance: Graph $G=(V,E), n = \card{V}, k\in \N$\\
Question: Is there a set of edges $F\subseteq V\times V$ such that $(V,
E\cup F)$ has a PEO $\alpha : V \rightarrow \nset{n}$ with
$\sum_{i=1}^n \cvdeg{\alpha^{-1}(i)}^2 \le k$?
\end{minipage}
}
\end{center}

\section{Minimum fill and minimum FLOPs are different}
\label{sec:fill_vs_flops}

In this section we present a class of graphs for which minimizing fill
and minimizing FLOPs are different problems.  Interestingly, a
structurally similar class of graphs is used in \cite[p.
14]{Kloks:1994} to show that \probname{MinimumFill} and
\probname{Treewidth} are different.  The treewidth problem is yet
another NP hard problem \cite{ArnborgEtal:1987} that can be formulated
using elimination degrees:
\begin{center}
\framebox{
\begin{minipage}{.8\textwidth}
\probname{Treewidth}\\ Instance: Graph $G=(V,E), n = \card{V}, k\in
\N$\\ Question: Is there a set of edges $F\subseteq V\times V$ such that
$(V, E\cup F)$ has a PEO $\alpha : V \rightarrow \nset{n}$ with
$\max_i \cvdeg{\alpha^{-1}(i)} \le k$?
\end{minipage}
}
\end{center}
We will use the abbreviation $\omega(\alpha) \defby \max_i
\cvdeg{\alpha^{-1}(i)}$, which is exactly the clique number of
the triangulation of $G$ corresponding to $\alpha$.

We will show that \probname{MinimumFill},
\probname{MinimumFLOPs} and \probname{Treewidth} are different
problems in a very strict sense.  In section \ref{sec:icc_graph} we
explore all minimal triangulations  of a parameterized class of graphs
(again, see~\cite{Heggernes:2006} for an overview of the terminology).
Using specific values for the parameters in section
\ref{sec:minflop_neq_minfill_neq_tw}, we show that minima for the
three optimization problems are attained at distinct triangulations.
Finally, in section \ref{sec:heuristics} we discuss the minimum FLOPs
problem from the viewpoint of ordering heuristics.

\subsection{An instructive class of graphs}
\label{sec:icc_graph}

In this section we study a class of graphs whose set of minimal
triangulations is sufficiently simple to analyse and yet general
enough to show that the extrema of minimum fill and minimum FLOPs are
attained at different triangulations.  In \cite[p.  14]{Kloks:1994} it
is pointed out that \probname{MinimumFill} and \probname{Treewidth}
are different problems using graphs from this class.  In that
monograph the author refers to an unpublished report for the details.
Our study covers this aspect as well.

A useful reference for all facts and results on minimal triangulations
which we assume here is the survey by Heggernes~\cite{Heggernes:2006}.  We recall
that every inclusion minimal triangulation can be obtained through
vertex elimination along some elimination ordering.  Such orderings
are called \emph{minimal elimination orderings} (MEOs).

\begin{figure}[t]
    \begin{center}
        \includegraphics{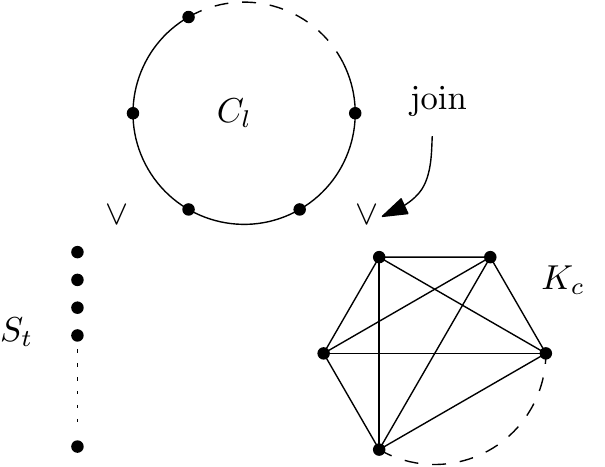}
    \end{center}
    \caption{The graph $G(l,t,c)$\label{fig:gicc}}
\end{figure}

The graph we want to study consists of a cycle $C_l$ on $l$ vertices, a
clique $K_c$ on $c$ vertices and an independent set $S_t$ of $t$ vertices, plus
all possible edges between the cycle and the other $t+c$ vertices (see
Fig. \ref{fig:gicc}).  More formally, for numbers $4 \le l, t, c \in
\N$ the graph is defined as $G = C_l \vee (S_t + K_c)$.
First we will characterize all minimal triangulations of
$G$.  In fact only two types of triangulations exist; they are shown
in Fig.  \ref{fig:gicc_triang}.

\begin{figure}[t]
\includegraphics{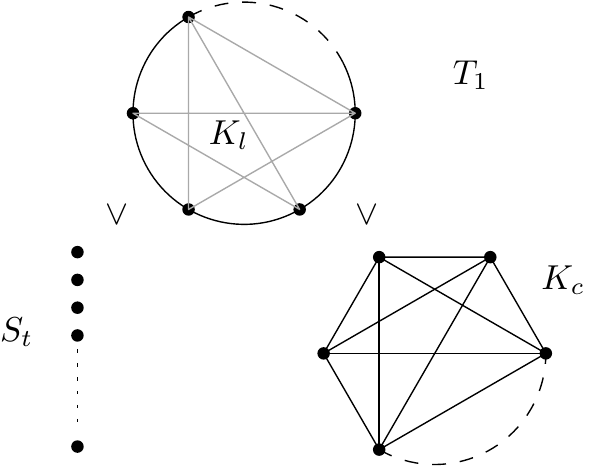}
\hfill
\includegraphics{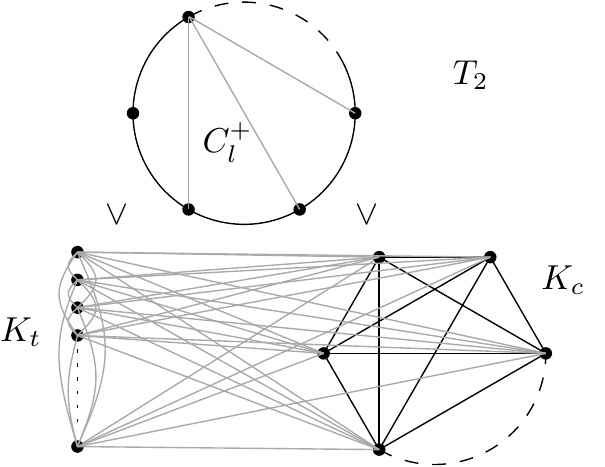}
\caption{The two types of triangulations of $G(l,t,c)$, $T_1$ and
$T_2$. Gray edges are fill edges.\label{fig:gicc_triang}}
\end{figure}

\begin{proposition}
The graph $G \defby C_l \vee (S_t + K_c)$ has exactly two
types of minimal triangulations $T_1 \cong K_l \vee (S_t + 
K_c)$ and $T_2 \cong C_l^+ \vee K_{t+c}$, where $C_l^+$ is a
minimal triangulation of $C_l$.
\end{proposition}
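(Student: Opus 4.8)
The plan is to work directly with the potential fill edges of $G$, which are exactly its non-edges, and these fall into three classes: chords of the cycle $C_l$, pairs inside the independent set $S_t$, and pairs between $S_t$ and $K_c$ (recall that in $S_t + K_c$ there are no edges between the two parts). The driving observation is a $4$-cycle forcing argument. If $x,y \in S_t \cup K_c$ are non-adjacent in $G$ and $v_1, v_2$ are two distinct cycle vertices, then $x v_1 y v_2$ is a $4$-cycle in $G$, because every cycle vertex is joined to all of $S_t \cup K_c$; its only possible chords are $xy$ and $v_1 v_2$. Hence in any triangulation $H$ at least one of these two edges must be present.

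This immediately yields the dichotomy needed for the ``only these two'' direction. Suppose first that some two cycle vertices $v_1, v_2$ are non-adjacent in a minimal triangulation $H$. Applying the observation to this fixed pair and to every non-adjacent pair $x,y \in S_t \cup K_c$ forces $xy \in E(H)$, so $S_t \cup K_c$ is completed to a clique $K_{t+c}$. Consequently $H \supseteq C_l \vee K_{t+c}$; restricting $H$ to the cycle vertices gives a chordal supergraph $C_l^+$ of $C_l$, and since all join edges are already present and $S_t \cup K_c$ is complete, $H = C_l^+ \vee K_{t+c}$. Minimality of $H$ then forces $C_l^+$ to be a minimal triangulation of $C_l$ (otherwise a cycle chord could be deleted while keeping $H$ chordal), giving $T_2$. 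In the remaining case all cycle chords are present, so the cycle vertices form $K_l$ and $H \supseteq K_l \vee (S_t + K_c)$. I would then check that $K_l \vee (S_t + K_c)$ is already chordal (eliminate $S_t$, then $K_c$, then $K_l$, each vertex being simplicial at its turn); being a triangulation of $G$ contained in $H$, minimality forces $H = T_1$.

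It remains to confirm that both candidates are genuinely minimal triangulations. For this I would invoke the standard characterization~\cite{Heggernes:2006} that a triangulation is minimal precisely when deleting any single fill edge destroys chordality, and for each fill edge exhibit the chordless cycle created by its removal. Deleting an edge inside $S_t$ or between $S_t$ and $K_c$ re-creates a chordless $4$-cycle $x v_1 y v_2$, where $v_1, v_2$ are chosen non-adjacent in the relevant cycle triangulation (such a pair exists because $l \ge 4$, so neither the cycle nor $C_l^+$ is complete). For $T_1$, deleting a cycle chord $v_1 v_2$ re-creates the chordless $4$-cycle $s_1 v_1 s_2 v_2$ with $s_1, s_2 \in S_t$ (here $t \ge 2$ is used). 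For $T_2$, chordality of the join $C_l^+ \vee K_{t+c}$ follows because $K_{t+c}$ is complete, and deleting a chord of $C_l^+$ leaves a chordless cycle among the cycle vertices (by minimality of $C_l^+$) that stays chordless in $H$, since any chord of it would again be an edge among cycle vertices.

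The main obstacle is exactly this uniqueness half: ruling out hybrid triangulations in which $S_t \cup K_c$ is only partially completed while the cycle is only partially triangulated. The $4$-cycle forcing argument is what collapses all such possibilities into the clean either/or. A secondary point to watch is that the join of two chordal graphs need not be chordal (for example $P_3 \vee P_3$ contains a chordless $4$-cycle), so it is essential that in $T_2$ the side $S_t \cup K_c$ is a full clique rather than merely a chordal graph; this is precisely what the forcing argument guarantees.
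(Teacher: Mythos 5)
Your proof is correct, but it takes a genuinely different route from the paper's on the main ``only these two types'' direction. The paper works through the machinery of minimal elimination orderings: every minimal triangulation arises from an MEO, and a case analysis on the \emph{first} eliminated vertex shows that after one elimination step the graph is already chordal (first vertex in $S_t$ or $K_c$, giving $T_1$) or equals $C_{l-1} \vee K_{t+c}$, whose minimal triangulations reduce to those of the shorter cycle (giving $T_2$). You avoid MEOs entirely and argue structurally: the $4$-cycle forcing observation (for non-adjacent $x,y \in S_t \cup K_c$ and cycle vertices $v_1,v_2$, the cycle $x v_1 y v_2$ has only $xy$ and $v_1v_2$ as possible chords) produces the clean dichotomy --- either all cycle chords are present, or $S_t \cup K_c$ is completed to a clique --- and minimality then pins down $T_1$ and $T_2$ respectively. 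What your approach buys is self-containedness: you need only the definition of chordality, heredity of chordality under induced subgraphs, the fact that joining a chordal graph with a complete graph preserves chordality, and the deletion characterization of minimal triangulations. What the paper's approach buys is brevity given the MEO background, plus it hands over exactly the PEOs that are reused immediately afterwards to compute the elimination degree sequences; note that its Case 3 silently relies on the same join-with-a-clique fact that you prove explicitly. The minimality halves of the two arguments are essentially identical (each fill edge is the unique chord of a $4$-cycle), differing only in the witness: the paper takes one vertex from $S_t$ and one from $K_c$, you take two from $S_t$. One detail you should spell out: in the $T_2$ minimality argument you need a non-adjacent pair of cycle vertices in $C_l^+$, i.e.\ that a \emph{minimal} triangulation of $C_l$ is never complete for $l \ge 4$; this holds because it has exactly $l-3$ chords, but it deserves a sentence rather than a parenthesis.
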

\begin{proof}
It is easy to verify that $T_1$ and $T_2$ are indeed chordal graphs,
since corresponding PEOs are readily constructed.  Let $T$ be a
minimal triangulation of $G$.  Then there exists a minimal
elimination ordering $\alpha : V(G) \rightarrow \nset{l+t+c}$ for $G$ whose
resulting filled graph is $T$.  Let $v = \alpha^{-1}(1)$ be the first
vertex to be eliminated and denote the graph arising from eliminating
$v$ by $G_v^+$. We distinguish three cases:

\begin{asparaenum}
\item[Case 1]: $v \in V(K_c)$.  Then $G^{+}_v \cong K_l \vee (S_t
    + K_{c-1})$, which is a chordal graph.  Since $\alpha$ is a
    MEO for $G$, $\{\alpha^{-1}(2), \dotsc , \alpha^{-1}(n)\}$ is a
    PEO for $G^{+}_v$ and so $T \cong T_1$.

\item[Case 2]: $v \in V(S_t)$.  Then $G_v^+ \cong K_l \vee (S_{t-1}
    + K_c)$, which is a chordal graph.  Since $\alpha$ is a MEO
    for $G$, $\{\alpha^{-1}(2), \dotsc , \alpha^{-1}(n)\}$ is a PEO
    for $G^{+}_v$ and so $T \cong T_1$.

\item[Case 3]: $v \in V(C_l)$.  Then $G^{+}_v \cong C_{l-1} \vee
    K_{t+c}$.  In this graph the only chordless cycle of length at
    least four can possibly be $C_{l-1}$. So the minimal
    triangulations of $G^{+}_v$ are now given by the minimal
    triangulations of $C_{l-1}$, which implies that $T \cong T_2$.
\end{asparaenum}

It remains to show that $T_1$ and $T_2$ are minimal.  We do so by
showing that in both triangulations every fill edge is the unique
chord of some four-cycle in $T_1$ and $T_2$.  For $T_1$ consider any fill
edge $f = (c_i, c_j)$ in $V(C_l) \times V(C_l)$ and $s \in V(S_t), v
\in V(K_c)$.  Then $(s, c_i, v, c_j, s)$ is a four-cycle in $T_1$
whose unique chord is $f$.  For $T_2$ let $f = (s, v)$ be a fill edge
with $s \in V(S_t)$, $v \in V(C_k)$ and $c_1$, $c_2$ two non-adjacent
vertices in $T_2$.  Then $(c_1, s, c_2, v, c_1)$ is a four-cycle in
$T_2$ whose unique chord is $f$.
\end{proof}

For both triangulations, we will now determine the elimination degree
sequence of certain PEOs and count the number of nonzero elements in
the corresponding Cholesky factors as well as the number of FLOPs
necessary to compute them.  

A PEO $\alpha_1$ for $T_1$ is given by ordering the $t$ vertices of
$S_t$ first, followed by any ordering of the remaining complete graph
of size $l+c$.  For the elimination degree sequence we obtain:
\begin{equation*}
    \{\vdeg{\alpha_1^{-1}(i)}\}_{i=1}^{l+t+c}
        = \{l\}_{j=1}^{t} \cup \{l + c - j\}_{j=1}^{l+c}.
\end{equation*}
Given that degree sequence, the nonzero-, FLOP count and clique number
for the Cholesky factor corresponding to $T_1$ are given by
\begin{align}
    \nnz{\alpha_1} & = \sum_{j} (\vdeg{\alpha_1^{-1}(j)} + 1) = t(l+1) +
        \sum_{j=1}^{l+c} j \label{eq:t1_nnz}\\
    \flopop{\alpha_1} & = \sum_{j} (\vdeg{\alpha_1^{-1}(j)} + 1)^2
        = t(l+1)^2 + \sum_{j=1}^{l+c} j^2 \label{eq:t1_flops}\\
    \omega(\alpha_1) & = \max_i {\vdeg{\alpha_1^{-1}(i)} + 1} = l + c
        \label{eq:t1_omega}
\end{align}
Another PEO for $T_1$ is obtained by ordering the vertices of $K_c$
first, followed by the vertices of $S_t$ and finally the vertices of
$K_l$.  Of course, the expressions
\eqref{eq:t1_nnz}--\eqref{eq:t1_omega} are the same for all PEOs. 

A PEO $\alpha_2$ for the triangulation $T_2$ is obtained by the first
$l-2$ vertices of a PEO for $C_l^{+}$ followed by an arbitrary
ordering of the vertices of the remaining $K_{t+c+2}$.  Noting that
for every PEO of $C_l^+$ the elimination degree of the first $l-2$
vertices is $t+c+2$, we obtain the degree sequence
\begin{equation*}
    \{ \vdeg{\alpha_2^{-1}(i)}\}_{i=1}^{l+t+c}
        = \{t+c+2\}_{j=1}^{l-2} \cup \{t+c+2 - j\}_{j=1}^{t+c+2}.
\end{equation*}
The resulting nonzero-, FLOP count and clique number are:
\begin{align}
    \nnz{\alpha_2} & = \sum_{j} (\vdeg{\alpha_2^{-1}(j)} + 1)
        = (l-2)(t+c+3) + \sum_{j=1}^{t+c+2} j \label{eq:t2_nnz}\\
    \flopop{\alpha_2} & = \sum_{j} (\vdeg{\alpha_2^{-1}(j)} + 1)^2
        = (l-2)(t+c+3)^2 + \sum_{j=1}^{t+c+2} j^2 \label{eq:t2_flops}\\
    \omega(\alpha_2) & = \max_i {\vdeg{\alpha_2^{-1}(i)} + 1} = t + c + 3
        \label{eq:t2_omega}
\end{align}

\subsection{Minimizing FLOPs, fill and treewidth are different
problems}
\label{sec:minflop_neq_minfill_neq_tw}

Let $64 < n \in \N$ and set $l=8n$, $t = 5n$, $c=4n$ and consider the
class of graphs from section \ref{sec:icc_graph} with these
parameters.  We will count the number of nonzeros and FLOPs for the
two triangulations.  Using \eqref{eq:t1_nnz}--\eqref{eq:t1_omega} and
\eqref{eq:t2_nnz}--\eqref{eq:t2_omega} we obtain
\begin{align*}
    \nnz{\alpha_1} & = 112n^2 + \bigo{n} &
    \nnz{\alpha_2} & = \frac{225}{2} n^2 + \bigo{n}\\
    \flopop{\alpha_1} & = 896 n^3 + \bigo{n^2} &
    \flopop{\alpha_2} & = 891 n^3 + \bigo{n^2}\\
    \omega(\alpha_1) & = 12n &
    \omega(\alpha_2) & = 9n+3,
\end{align*}
and it is readily verified that the omitted lower order terms are
dominated by the leading terms if $n>64$.  So for this choice of
values for $l$, $t$, $c$, we see that $\alpha_1$ yields the optimal
triangulation for the fill, but not for the number of FLOPs or the
size of the largest clique.  The latter two metrics are minimized by
$\alpha_2$, which is suboptimal for the fill.

If the values $l=2n+3, t=n, c=2n$, $n>3$, are chosen, one obtains the
class of graphs from Kloks' example~\cite[p.14]{Kloks:1994}.  In that
case $\alpha_1$ minimizes both the fill and the number of FLOPs, but not
the size of the largest clique.  The minimum clique size is attained by
$\alpha_2$, which is suboptimal for the fill and FLOPs:
\begin{align*}
    \nnz{\alpha_1} & = 10n^2 + \bigo{n} &
    \nnz{\alpha_2} & = \frac{21}{2} n^2 + \bigo{n}\\
    \flopop{\alpha_1} & = \frac{76}{3} n^3 + \bigo{n^2} &
    \flopop{\alpha_2} & = 27 n^3 + \bigo{n^2}\\
    \omega(\alpha_1) & = 4n+3 &
    \omega(\alpha_2) & = 3n+3
\end{align*}

\begin{theorem}
\label{thm:problems_are_different}
    The three chordal graph embedding problems \probname{MinimumFill},
    \probname{MinimumFLOPs} and \probname{Treewidth}  are different in
    the sense that no two such metrics can be minimized simultaneously
    in general.
\end{theorem}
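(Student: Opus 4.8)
The plan is to reduce all three optimization problems to a comparison of the only two minimal triangulations $T_1$ and $T_2$ furnished by the preceding proposition, and then simply to read off the leading-order coefficients already computed for the two parameter families. The crucial preliminary is to argue that, for each metric, the optimum over \emph{all} triangulations of $G$ is attained at an inclusion-minimal one. For $\nnz{\cdot}$ this is immediate from the identity $\nnz{\alpha} = n + \card{E(T)}$ (the closed elimination degrees sum to the $n$ diagonal entries plus the number of edges of the triangulation $T$), so passing to a proper subgraph triangulation strictly decreases the fill. For $\omega(\cdot)$ it is the standard fact that the clique number cannot increase when edges are removed, together with the fact that every triangulation contains a minimal one. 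The only genuinely non-obvious case is \probname{MinimumFLOPs}, and resolving it is the main obstacle.

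To handle FLOPs I would first establish the order-independent identity
\begin{equation*}
    \flopop{\alpha} = n + 3\,\card{E(T)} + 2\,\tau(T),
\end{equation*}
where $T$ is the triangulation with PEO $\alpha$ and $\tau(T)$ is its number of triangles. This follows by expanding $\cvdeg{\alpha^{-1}(i)}^2 = (1 + d^{>})^2$, where $d^{>}$ is the number of not-yet-eliminated neighbors: the constant terms sum to $n$, the linear terms to $2\card{E(T)}$, and the quadratic terms $\sum_i (d^{>})^2$ split into $\card{E(T)}$ (the coincident pairs) plus the ordered pairs of distinct higher neighbors of a common vertex. Since the higher neighbors of any vertex form a clique, each such ordered pair determines a triangle whose $\alpha$-minimum is that vertex, and each triangle is counted by its two non-minimal vertices in both orders, yielding $2\tau(T)$. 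As both $\card{E(T)}$ and $\tau(T)$ are monotone under edge inclusion, $\flopop{\cdot}$ is monotone as well, so its optimum too is attained at a minimal triangulation. Consequently each of the three optima equals the smaller of the values recorded in \eqref{eq:t1_nnz}--\eqref{eq:t2_omega} for $T_1$ and $T_2$; moreover, because $\nnz{\cdot}$ and $\flopop{\cdot}$ are \emph{strictly} monotone, their minimizers are exactly the triangulations isomorphic to whichever of $T_1,T_2$ attains the smaller value.

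With this in hand the theorem reduces to inspecting the two tables. For the family $l=8n,\,t=5n,\,c=4n$ the fill is minimized only by $T_1$, while $T_2$ has strictly fewer FLOPs and strictly smaller $\omega$; hence the unique fill-optimizer fails to optimize either other metric, separating \probname{MinimumFill} from both \probname{MinimumFLOPs} and \probname{Treewidth}. For the family $l=2n+3,\,t=n,\,c=2n$ the FLOP count is minimized only by $T_1$, yet $\omega(T_1)=4n+3 > 3n+3 = \omega(T_2)$, so the unique FLOPs-optimizer is not treewidth-optimal; dually, any treewidth-optimal triangulation must contain $T_2$ and therefore, by the monotonicity above, has FLOP count at least $\flopop{\alpha_2} = 27n^3 + \bigo{n^2}$, strictly above the FLOPs optimum $\tfrac{76}{3}n^3 + \bigo{n^2}$. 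This separates \probname{MinimumFLOPs} from \probname{Treewidth}, and together the two families show that no common triangulation minimizes any two of the three metrics, which is exactly the claim. I expect the triangle-counting identity to be the cleanest route to the FLOPs monotonicity, avoiding a direct elimination-game argument where the change of perfect elimination ordering obscures the comparison.
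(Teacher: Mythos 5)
Your proof is correct and follows essentially the same route as the paper: the theorem is obtained by combining the classification of the minimal triangulations into the two types $T_1$ and $T_2$ with the explicit counts \eqref{eq:t1_nnz}--\eqref{eq:t2_omega} for the two parameter families $l=8n,t=5n,c=4n$ and $l=2n+3,t=n,c=2n$. The one genuine addition is your order-free identity $\flopop{\alpha}=n+3\card{E(T)}+2\tau(T)$, which cleanly justifies the restriction to inclusion-minimal triangulations for the FLOP metric --- a step the paper leaves implicit --- and it checks out (the ordered pairs of distinct higher neighbors of a vertex do biject with twice the triangles having that vertex as their $\alpha$-minimum, since higher neighborhoods in a PEO are cliques).
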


The three problems above are equivalent to minimizing the $1$-, $2$-
and $\infty$-norm of the vector of elimination degrees over the set of
all chordal embeddings.  It would be interesting to learn whether
\emph{all} such $p$-norm minimization problems for, say, $p \in
[1,\infty]$ are different in the sense of
Thm.~\ref{thm:problems_are_different}.  We did some very preliminary
but encouraging experiments for some pairs of $p$-norms, but did not
pursue this question rigorously.

\subsection{Minimum FLOPs and heuristics}
\label{sec:heuristics}

The minimum degree (MD) heuristic and its variations (e.g.
AMD~\cite{Amestoy:1996}, MMD~\cite{Liu:1985}) are a popular class of
ordering heuristics commonly used to reduce the number of fill
elements in the Cholesky factor.  These heuristics use the elimination
degree of the vertices as their primary local criterion for ordering
the vertices.  Note that this criterion is in fact \emph{the}
canonical local criterion for minimizing the FLOPs and not the fill,
in which context MD type heuristics are usually put.

The canonical criterion for locally minimizing the number of fill
elements is the \emph{deficiency} of a vertex, which accounts for the
number of fill edges the elimination of the vertex would imply.  It
has been observed~\cite{NgRaghavan:1999, RothbergEisenstat:1998} that
using this criterion (or approximations of it) instead of the
elimination degree usually results in less arithmetic (and fill).  In
fact, the authors of~\cite{RothbergEisenstat:1998} regard reducing the
number of FLOPs as their primary objective for their experiments with
the deficiency criterion.

Reported experimental results for ordering heuristics like the ones
above certainly have contributed to the common understanding that
reducing the number of fill elements usually goes hand in hand with
reducing the number of arithmetic operations and vice versa.  While
this behaviour is \emph{typically} observed when ordering heuristics
are benchmarked, it is worth pointing out that it may actually happen
in practice that an ordering that implies less fill than another
ordering actually causes significantly more FLOPs (or vice versa).

To confirm this we conducted a very simple experiment.  We computed
the ordering statistics for 1130 pattern symmetric matrices from the
University of Florida (UF) sparse matrix collection~\cite{DavisHu:2011}
using AMD (2.3.0)~\cite{AmestoyDavisDuff:2004} and METIS
(4.0.3)~\cite{KarypisKumar:1998}.  For 91 of these matrices one
heuristic produced fewer fill elements than the other while performing
worse with respect to the FLOP count at the same time.  For example,
for the matrix ``INPRO/msdoor'' from the UF collection (id 1644), a structural problem,  AMD
produces about 2\% fewer fill elements than METIS while requiring
approximately 22\% more arithmetic operations.

Finally we mention that several approximation algorithms for all three
problems \probname{MinimumFill}, \probname{MinimumFLOPs} and
\probname{Treewidth} exist, e.g.~\cite{AgrawalKleinRavi:1993,
BodlaenderEtal:1995, NatanzonEtal:2000}.

\section{Minimizing FLOPs is NP hard}
\label{sec:flops_np_hard}

We now show that minimizing the FLOP count in sparse Cholesky
factorization is indeed an NP hard problem.  To do so, we reduce the
\probname{MaxCut} problem to a certain class of quadratic arrangement
problems in section \ref{sec:OQA}.  In section \ref{sec:QCC} we reduce
such a quadratic arrangement problem to the minimum FLOPs problem via
a quadratic variation of the bipartite chain graph completion problem.

\subsection{Quadratic vertex arrangement problems}
\label{sec:OQA}

In the optimal linear arrangement
problem, we are given a graph $G =
(V,E)$ and are asked to arrange the vertices of $G$ at positive
integer positions on the real line such that the sum of the implied
edge lengths is minimum:
 
\begin{center}
\framebox{
\begin{minipage}{.85\textwidth}
\probname{OptimalLinearArrangement (OLA)}\\
Instance: Graph $G=(V, E)$ on $n$ vertices, $k\in \N$\\
Question: Is there a bijection $\alpha : V \rightarrow \nset{n}$ s.t.
$\sum_{(u,v) \in E} |\alpha(u) - \alpha(v)| \le k$?
\end{minipage}
}
\end{center}

OLA is NP hard~\cite[GT42]{GareyJohnson:1979}.  It is also known as \probname{MinimumOneSum} (M1S) and minimizes the
1-norm of a vector of distances implied by the linear arrangement of
the vertices of the graph. Other norms have been considered;  for the
2-norm (\probname{MinimumTwoSum}, M2S) and the infinity norm
(\probname{Bandwidth}) the corresponding arrangement problems are
known to be NP hard~\cite{Papadimitriou:1976, GeorgePothen:1997}.  In
contrast to these arrangement problems, the class of arrangement
problems we discuss here cannot be expressed in terms of a $p$-norm of
the distance vector.

\begin{figure}
    \begin{center}
        \includegraphics[width=.7\textwidth]{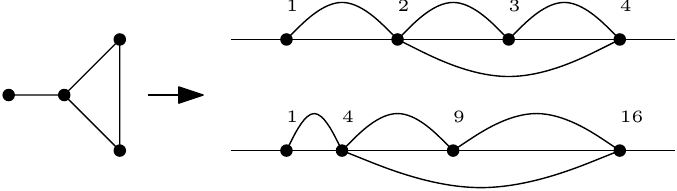}
\caption{Linear and quadratic arrangement of a graph.  The quadratic
cost function here is $f(x) = x^2$, so this is an instance of
OQA(0).  The cost of the linear arrangement is 5, while the quadratic
cost is 27.\label{fig:la_and_qa}}
    \end{center}
\end{figure}

Instead of laying out the vertices of $G$ at equally spaced positions,
we consider certain quadratically spaced positions (see Fig.
\ref{fig:la_and_qa}).  We call this the
\probname{OptimalQuadraticArrangement}($c$) (OQA($c$)) problem.  Let
\begin{equation}
\label{eqn:c_polynomial}
c = c_2 X^2 + c_1 X + c_0,\quad c_0,c_1,c_2 \in \N
\end{equation}
be a polynomial of degree at most 2 with non-negative integer
coefficients.  We regard $c$ as a parameter for the function
\begin{equation*}
    f : \nset{n} \rightarrow \Z_+,\; x \mapsto x^2 + c(n)x.
\end{equation*}
Then the positions on the real line at which we place the vertices of
$G$ are given by $f(\nset{n})$.  Notice that $f$ is a bijection.  Allowing
for a minor abuse of notation we will sometimes write $c$ instead of
$c(n)$ when it can be seen from the context whether the integer $c(n)$
or the polynomial $c$ is referred to.  Formally we define the
following class of decision problems, parametrized by the polynomial
$c$ as follows:
\begin{center}
\framebox{
\begin{minipage}{.8\textwidth}
\probname{OptimalQuadraticArrangement}(c) (OQA($c$))\\ Instance: Graph
$G=(V, E)$ on $n$ vertices, $k\in \N$\\ Question: Is there a bijection
$\alpha : V \rightarrow \nset{n}$ such that $\sum_{(u,v) \in E}
|f(\alpha(u)) - f(\alpha(v))| = |\alpha(u)^2 - \alpha(v)^2 + c
(\alpha(u) - \alpha(v))| \le k$?
\end{minipage}
}
\end{center}
For example, when $c$ is the zero polynomial, this includes the
problem where the vertex positions are laid out according to the
mapping $x \mapsto x^2$.  In section \ref{sec:oqa(c)_is_np_hard} we
will prove that OQA($c$) is NP hard for \emph{every} choice  of the
polynomial $c$ in \eqref{eqn:c_polynomial}.

\subsubsection{Basic properties of the OQA problem}

We will now discuss a few properties of the OQA problem and introduce
some useful notation for later use.  Given a graph $G = (V,E)$ on $n$
vertices, a bijection $\alpha : V \rightarrow N \subset \Z_+$ and the
quadratic function $f(x) = x^2 + c(n)x$, we denote the quadratic cost
of such an arrangement by
\begin{equation*}
    \quadcost(\alpha) \defby \sum_{(u,v) \in E} |f(\alpha(u)) -
        f(\alpha(v))|,
\end{equation*}
and the corresponding linear cost for the arrangement by
\begin{equation*}
    \lincost(\alpha) \defby \sum_{(u,v) \in E} |\alpha(u) - \alpha(v)|.
\end{equation*}
For an edge $e = (u,v) \in E$, we sometimes write its implied quadratic
cost under the ordering $\alpha$ as
\begin{equation*}
    \phi_\alpha(e) \defby |f(\alpha(u)) - f(\alpha(v))|,
\end{equation*}
where we may drop the index $\alpha$ if the ordering is implied by
the context.

\begin{definition}
For a given ordering $\alpha : V \rightarrow
\nset{n}$ and a non-negative integer $r$, we denote by $\alpha+r$ the
following translated ordering:
\begin{align*}
    \alpha+r : V & \rightarrow \{1+r, \dotsc, n+r\}\\
    v & \mapsto \alpha(v) + r
\end{align*}
\end{definition}

Translated orderings are actually not consistent with the definitions
of the arrangements problems (there we required $\alpha$ to map onto $\nset{n}$).
They are compatible with the definitions of $\quadcost(\cdot)$ and
$\lincost(\cdot)$, however.

The linear arrangement cost is translation invariant, since
\begin{equation*}
    \lincost(\alpha+r) = \sum_{(u,v) \in E} |(\alpha(u) + r) - (\alpha(v) + r)|
        = \sum_{(u,v) \in E} |\alpha(u) - \alpha(v)|
        = \lincost(\alpha),
\end{equation*}
but the quadratic arrangement costs of the two orderings are
different;  a translation results in a linear change of the
arrangement cost:
\begin{lemma}[translation lemma] \label{lemma:translation}
For an ordering $\alpha : V \rightarrow \nset{n}$ and a displacement $r \in
\N$ we have
\begin{equation*}
    \quadcost(\alpha+r) = \quadcost(\alpha) + 2r \lincost(\alpha).
\end{equation*}
\end{lemma}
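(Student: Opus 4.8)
The plan is to prove the claimed identity edge by edge and then sum over all edges. Fix an edge $e = (u,v) \in E$ and abbreviate $a = \alpha(u)$, $b = \alpha(v)$. The starting point, and the one step that carries any content, is to rewrite the quadratic edge cost in a factored form. Since $f(x) = x^2 + c\,x$, I would write
\begin{equation*}
    \phi_\alpha(e) = |a^2 - b^2 + c(a-b)| = |a - b|\,(a + b + c),
\end{equation*}
using the factorization $a^2 - b^2 + c(a-b) = (a-b)(a+b+c)$ together with the fact that $a + b + c > 0$: the arrangement positions are positive integers and $c = c(n) \ge 0$ by the non-negativity assumption on the coefficients in \eqref{eqn:c_polynomial}. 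Hence the absolute value only ever sees the sign of $a - b$, and the second factor comes out unsigned.

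Next I would apply the same factorization to the translated ordering $\alpha + r$. Since $(\alpha+r)(u) = a+r$ and $(\alpha+r)(v) = b+r$, the displacement cancels in the first factor while it shifts the second by $2r$:
\begin{equation*}
    \phi_{\alpha+r}(e) = |(a+r) - (b+r)|\,\big((a+r) + (b+r) + c\big)
        = |a - b|\,(a + b + c + 2r).
\end{equation*}
Comparing this with the expression for $\phi_\alpha(e)$ above yields the per-edge relation $\phi_{\alpha+r}(e) = \phi_\alpha(e) + 2r\,|a - b|$.

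Finally I would sum this identity over all edges $e \in E$. By linearity, the first term sums to $\quadcost(\alpha)$, and the second term sums to $2r \sum_{(u,v)\in E} |\alpha(u) - \alpha(v)| = 2r\,\lincost(\alpha)$, giving
\begin{equation*}
    \quadcost(\alpha+r) = \quadcost(\alpha) + 2r\,\lincost(\alpha),
\end{equation*}
as claimed. I do not expect a genuine obstacle here: the argument is elementary. The only point that requires a moment's care is the sign reasoning that lets the absolute value split off the factor $|a-b|$ cleanly, which is precisely where the non-negativity of $c$ and the positivity of the positions enter; without them the factor $a+b+c$ could not be pulled out of $|\cdot|$ unsigned, and the clean linear dependence on $r$ would break.
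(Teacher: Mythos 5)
Your proof is correct and takes essentially the same approach as the paper's: an edge-by-edge algebraic computation in which the absolute value is resolved via the positivity of the positions and the non-negativity of $c$ (i.e., the monotonicity of $f$), followed by summation over $E$. The only cosmetic difference is that you factor $a^2-b^2+c(a-b)=(a-b)(a+b+c)$ and keep the factor $|a-b|$, whereas the paper orients each edge so that $\alpha(u)<\alpha(v)$ and expands the shifted squares directly; the cancellation of the $r^2$ terms is the same in both.
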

\begin{proof}
We can assume that in the given ordering $\alpha$, we have
for an edge $(u,v) \in E$ that $\alpha(u) < \alpha(v)$ (otherwise call
this undirected edge $(v,u)$ instead).
\begin{equation*}
\begin{split}
    \quadcost(\alpha+r)
    & = \sum_{(u,v) \in E} \left( ((\alpha(v) + r)^2 + c (\alpha(v) + r) )
        - ( (\alpha(u) + r)^2 + c (\alpha(u) + r) ) \right)\\
    & = \sum_{(u,v) \in E} \left( \alpha(v)^2 + 2\alpha(v) r
        - \alpha(u)^2 - 2\alpha(u) r
        + c \alpha(v) - c \alpha(u) \right)\\
    & = \quadcost(\alpha) + 2r\lincost(\alpha).
\end{split}
\end{equation*}
\end{proof}

Denote by $K_s$ the complete graph on $s$ vertices.  Both the
quadratic and linear costs for arranging $K_s$ are independent of the
chosen bijection $\alpha$.  Elementary counting immediately gives that
the linear arrangement cost of $K_s$ is $\frac{1}{6} s (s^2-1)$.  The
quadratic cost is given by the following lemma, whose proof is a
straightforward computation\opt{arxiv}{ (see
Appendix~\ref{app:proof_quadcost_clique_trans})}.
\begin{lemma}
    \label{lemma:quadcost_clique_trans}
    Let $\alpha: V(K_s) \rightarrow \nset{s}$ be an arrangement of $K_s$
    and $r \in \N$, then
    \begin{equation*}
        \quadcost(\alpha+r) = \frac{1}{6} s (s^2 - 1) (2r + c + s + 1).
    \end{equation*}
\end{lemma}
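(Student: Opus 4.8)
The plan is to compute $\quadcost(\alpha+r)$ for $K_s$ directly, exploiting the fact that both the quadratic and linear costs are independent of the chosen bijection. Since every pair of vertices in $K_s$ is joined by an edge, I may as well take $\alpha$ to be the identity on $\nset{s}$, so that the arrangement $\alpha+r$ places the vertices at the positions $\{1+r,\dotsc,s+r\}$. The edge set is then in bijection with the set of unordered pairs $\{i,j\}$ with $1 \le i < j \le s$, and the cost of the edge realized at positions $i+r$ and $j+r$ is $f(j+r) - f(i+r)$ where $f(x) = x^2 + cx$. Thus I would start from
\begin{equation*}
    \quadcost(\alpha+r) = \sum_{1 \le i < j \le s} \bigl( f(j+r) - f(i+r) \bigr).
\end{equation*}

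The cleanest route is to apply the translation lemma (Lemma~\ref{lemma:translation}) rather than expand from scratch: it gives $\quadcost(\alpha+r) = \quadcost(\alpha) + 2r\,\lincost(\alpha)$. Since $\lincost$ for $K_s$ is already recorded as $\frac{1}{6}s(s^2-1)$, the only remaining task is to evaluate $\quadcost(\alpha)$, the untranslated quadratic cost of the clique. For this I would compute $\sum_{1 \le i < j \le s}\bigl((j^2 - i^2) + c(j-i)\bigr)$. The linear part contributes $c$ times $\lincost(\alpha) = \frac{1}{6}c\,s(s^2-1)$, so it remains to handle $\sum_{1 \le i < j \le s}(j^2 - i^2)$. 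Writing this sum by fixing each position $k$ and counting how many pairs it appears in with a plus versus a minus sign, the coefficient of $k^2$ is $(k-1) - (s-k) = 2k - s - 1$, whence $\sum_{1 \le i < j \le s}(j^2 - i^2) = \sum_{k=1}^{s}(2k - s - 1)k^2$. Evaluating this via the standard formulas for $\sum k^2$ and $\sum k^3$ yields, after simplification, $\frac{1}{6}s(s^2-1)(s+1)$.

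Collecting the three pieces gives
\begin{equation*}
    \quadcost(\alpha) = \tfrac{1}{6}s(s^2-1)(s+1) + \tfrac{1}{6}c\,s(s^2-1) = \tfrac{1}{6}s(s^2-1)(c + s + 1),
\end{equation*}
and then the translation lemma supplies the factor $2r\,\lincost(\alpha) = \tfrac{1}{6}s(s^2-1)\cdot 2r$, so that
\begin{equation*}
    \quadcost(\alpha+r) = \tfrac{1}{6}s(s^2-1)(2r + c + s + 1),
\end{equation*}
as claimed. I would also note for completeness that the bijection-independence of both costs on a clique (already asserted before the lemma) is what licenses choosing $\alpha = \mathrm{id}$ without loss of generality.

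The only mild obstacle is the bookkeeping in the coefficient-counting step for $\sum_{1 \le i < j \le s}(j^2 - i^2)$: one must correctly account for the $(k-1)$ pairs in which position $k$ is the larger index and the $(s-k)$ pairs in which it is the smaller, and then carry out the telescoping-style algebra cleanly. This is entirely routine, and leaning on Lemma~\ref{lemma:translation} to isolate the $r$-dependence means I never have to expand $(k+r)^2$ terms, which keeps the computation short; the whole thing is the \enquote{straightforward computation} the statement promises.
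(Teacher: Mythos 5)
Your proof is correct, and its skeleton matches the paper's: both arguments isolate the $r$-dependence via the translation lemma (Lemma~\ref{lemma:translation}) and reduce to evaluating $\quadcost(\alpha)$ for the clique placed at positions $1,\dotsc,s$. The only real difference is how that remaining double sum is organized. The paper groups the edges by their ordering distance $d=|\alpha(u)-\alpha(v)|$, observes that the $s-d$ edges at distance $d$ contribute $d(c+s+1)(s-d)$ in total, and then sums over $d=1,\dotsc,s-1$. You instead fix a position $k$ and count the signed multiplicity with which it occurs, obtaining the coefficient $(k-1)-(s-k)=2k-s-1$ for $k^2$ and reducing to $\sum_{k=1}^{s}(2k-s-1)k^2$ via the standard power-sum formulas; your coefficient count and the resulting value $\tfrac{1}{6}s(s^2-1)(s+1)$ both check out, as does splitting off the $c$-term as $c\cdot\lincost(\alpha)$. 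The two decompositions are equally elementary: the paper's distance-class grouping has the minor advantage that the factor $(c+s+1)$ already appears at the per-distance level, so the linear and quadratic parts never need to be separated, while your signed-multiplicity count is arguably the more mechanical route. Either way the computation is routine and your proof is complete.
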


It is easy to see that the OQA problem is different from the OLA
problem in the same sense as \probname{MinimumFill} and
\probname{MinimumFLOPs} are different\opt{arxiv}{ (see Appendix
\ref{app:oqa_vs_ola})}.

\subsubsection{OQA($c$) is NP hard}
\label{sec:oqa(c)_is_np_hard}

We will now show that OQA($c$) is an NP hard problem for every choice of
the polynomial $c$ in \eqref{eqn:c_polynomial}.  Our strategy to
reduce from \probname{MaxCut} follows along the lines of the reduction
from \probname{MaxCut} to OLA in \cite[chap. 8]{Even:1979}, but the
details are very much different.

The reduction will reduce \probname{MaxCut} to the \emph{maximization}
version of OQA.  Thus we show first that maximization and minimization
of the quadratic arrangement are equivalent (in the complexity
sense).
\begin{proposition}
    \probname{MaxOQA(c)} and \probname{MinOQA(c)} are equivalent.
\end{proposition}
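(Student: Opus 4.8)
The plan is to use complementation together with Lemma~\ref{lemma:quadcost_clique_trans}, which already tells us that arranging a \emph{complete} graph has a cost that is independent of the chosen bijection. The idea is that the quadratic cost of $G$ and of its complement $\bar G$ must always add up to this arrangement-invariant constant, so that a cheap arrangement for $G$ is precisely an expensive one for $\bar G$ and conversely; this yields the two reductions establishing the equivalence.

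First I would fix the vertex set $V$ with $n = \card{V}$ and let $\bar G \defby (V, E(K_n) \setminus E)$ denote the complement of $G$, where $K_n$ is the complete graph on $V$. Since every edge of $K_n$ belongs to exactly one of $E$ and $E(\bar G)$, for \emph{any} bijection $\alpha : V \rightarrow \nset{n}$ the per-edge costs split as
\begin{equation*}
    \quadcost_G(\alpha) + \quadcost_{\bar G}(\alpha)
        = \sum_{e \in E(K_n)} \phi_\alpha(e)
        = \quadcost_{K_n}(\alpha).
\end{equation*}
By Lemma~\ref{lemma:quadcost_clique_trans} (taking $r = 0$ and $s = n$) the right-hand side equals the constant
\begin{equation*}
    M_n \defby \tfrac{1}{6} n (n^2 - 1)(c(n) + n + 1),
\end{equation*}
which does not depend on $\alpha$ and is computable in polynomial time from $n$ and the coefficients of $c$. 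Hence $\quadcost_{\bar G}(\alpha) = M_n - \quadcost_G(\alpha)$ for the \emph{same} arrangement $\alpha$.

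Next I would turn this identity into the two reductions. To reduce \probname{MinOQA(c)} to \probname{MaxOQA(c)}, map an instance $(G, k)$ to the instance $(\bar G, M_n - k)$: an arrangement satisfies $\quadcost_G(\alpha) \le k$ if and only if the same arrangement satisfies $\quadcost_{\bar G}(\alpha) \ge M_n - k$, so the two instances have the same answer. The transformation is polynomial because both $\bar G$ and $M_n$ are, and the reverse reduction from \probname{MaxOQA(c)} to \probname{MinOQA(c)} is verbatim the same construction, using $\bar{\bar G} = G$.

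I do not expect a genuine obstacle here: the core identity is immediate from the clique lemma, and the only points requiring care are bookkeeping. Specifically, one must observe that complementation leaves the vertex count $n$, the parameter $c(n)$, and hence the layout $f(\nset{n})$ unchanged, so that $M_n$ and the set of admissible positions are identical for both instances. One should also note that every arrangement cost lies in $[0, M_n]$, so without loss of generality $0 \le k \le M_n$ and the threshold $M_n - k$ is again a non-negative integer (otherwise the instance is trivially decidable).
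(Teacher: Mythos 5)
Your proof is correct and takes essentially the same route as the paper: both use the complementation identity $\quadcost_G(\alpha) + \quadcost_{\overline{G}}(\alpha) = \frac{1}{6} n (n^2-1)(c+n+1)$, obtained from Lemma~\ref{lemma:quadcost_clique_trans} with $r=0$, to translate the threshold $k$ into $M_n - k$ for the complement graph. The extra bookkeeping you include (polynomial computability of $M_n$, integrality of the new threshold, and symmetry of the reduction via $\bar{\bar G} = G$) is sound but not a departure from the paper's argument.
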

\begin{proof}
Let $(G=(V,E); k),\; \card{V} = n,$ be an instance of
\probname{MaxOQA(c)} and define $(\bar{G}; k' \defby \frac{1}{6} n
(n^2 - 1) (c + n + 1) - k)$ to be an instance of \probname{MinOQA(c)}
($\overline{G}$ is the complement of $G$).  Denote by $\overline{E}$
the set of edges of $\overline{G}$, then by Lemma
\ref{lemma:quadcost_clique_trans} (with $r=0$) we know that for any ordering $\alpha : V
\rightarrow \nset{n}$ we have
\begin{equation*}
    \sum_{e \in E} \phi(e) + \sum_{e \in \overline{E}} \phi(e)
        = \frac{1}{6} n (n^2 - 1) (c + n + 1) = k + k',
\end{equation*}
so
\begin{equation*}
    \sum_{e \in E}    \phi(e) \ge k
    \Leftrightarrow
    \sum_{e \in \overline{E}} \phi(e) \le k',
\end{equation*}
which completes the proof.
\end{proof}

From now on, we only consider the maximization version of OQA($c$).

If $G = (V, E)$ is a graph and $X \subseteq V$, we denote by
$\ecut{X}$ the edge cut
\begin{equation*}
    \{(u,v) \in E \setcond u \in X \wedge v \in V\setminus X\}.
\end{equation*}
Sometimes we simply write $\overline{X}$ for $V \setminus X$.  Deciding
whether $G$ admits a cut of size $k \in \Z_+$ or greater, the
\probname{MaxCut} problem, is a fundamental NP complete problem.

We introduce the following notation that we will use in the next two
lemmas and the theorem that follows.  Let $\alpha : V \rightarrow \nset{n}$
be an arrangement for $G= (V,E)$.  For $1 \le j \le n$, we define the
set
\begin{equation*}
    X_j = \{v \in V \setcond \alpha(v) \le j \}.
\end{equation*}
The sets $X_j$ naturally induce cuts $\ecut{X_j}$.

In the reduction from \probname{MaxCut} we will need to rearrange
isolated vertices in a given ordering.  The following two lemmas give
sufficient conditions for performing these rearrangements without
decreasing the arrangement costs.

\begin{figure}
    \begin{center}
        \includegraphics[width=.5\textwidth]{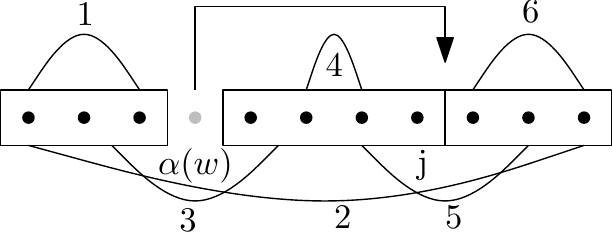}
        \caption{Illustration for the proof of Lemma \ref{lemma:move_iso_right}
        \label{fig:move_iso_right}}
    \end{center}
\end{figure}

\begin{lemma} \label{lemma:move_iso_right}
Let $1 \le  j < n$ and let $w \in V$ be an isolated vertex such that
$\alpha(w) < j$ and $\card{\ecut{X_k}} \le \card{\ecut{X_j}}$ for all
$\alpha(w) \le k \le j$.  Then for the ordering $\alpha' : V
\rightarrow \nset{n}$ defined by
\begin{equation*}
    \alpha'(v) =
    \begin{cases}
        \alpha(v) & \text{if $\alpha(v) < \alpha(w)$ or $j < \alpha(v)$}\\
        j & \text{if $v=w$}\\
        \alpha(v) - 1 & \text{if $\alpha(w) < \alpha(v) \le j$}
    \end{cases}
\end{equation*}
we have $\quadcost(\alpha') \ge \quadcost(\alpha)$.
\end{lemma}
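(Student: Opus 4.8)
The plan is to compute the cost difference $\Delta \defby \quadcost(\alpha') - \quadcost(\alpha)$ explicitly and to show $\Delta \ge 0$. Since $w$ is isolated it carries no edge cost, so the only effect of passing from $\alpha$ to $\alpha'$ is that every vertex occupying a position in $\{\alpha(w)+1, \dotsc, j\}$ is shifted one unit to the left, while all remaining vertices stay fixed (and $w$ jumps to position $j$). In particular the relative order of all vertices other than $w$ is preserved, so for each edge $e=(u,v)$ I may keep the convention $\alpha(u) < \alpha(v)$ in both orderings and track the cost edge by edge.

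First I would record the elementary identity $f(x) - f(x-1) = 2x - 1 + c$ and abbreviate $g(x) \defby 2x - 1 + c$. Writing $\chi(x) = 1$ if $x \in \{\alpha(w)+1,\dotsc,j\}$ and $\chi(x) = 0$ otherwise, a direct calculation shows that shifting the left endpoint of an edge one step left raises its quadratic length by $g(\alpha(u))$, whereas shifting the right endpoint lowers it by $g(\alpha(v))$. Hence the change of $\phi(e)$ equals $\chi(\alpha(u))g(\alpha(u)) - \chi(\alpha(v))g(\alpha(v))$. Summing over all edges and regrouping the contributions by the moved positions, I expect to obtain
\[
    \Delta = \sum_{k=\alpha(w)+1}^{j} g(k)\,\bigl(d^+(k) - d^-(k)\bigr),
\]
where $d^+(k)$ (respectively $d^-(k)$) counts the edges joining the vertex at position $k$ to vertices placed to its right (respectively left).

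The crucial observation is that $d^+(k) - d^-(k)$ is exactly the cut increment $\card{\ecut{X_k}} - \card{\ecut{X_{k-1}}}$: moving the vertex at position $k$ from the right block $\overline{X_{k-1}}$ into $X_k$ turns its $d^+(k)$ right-edges into cut edges and removes its $d^-(k)$ left-edges from the cut. Substituting this and summing by parts, using that $g(k+1) - g(k) = 2$ is constant, yields
\[
    \Delta = g(j)\card{\ecut{X_j}} - g(\alpha(w)+1)\card{\ecut{X_{\alpha(w)}}} - 2\sum_{k=\alpha(w)+1}^{j-1}\card{\ecut{X_k}}.
\]
Now the hypothesis $\card{\ecut{X_k}} \le \card{\ecut{X_j}}$ for $\alpha(w) \le k \le j$ lets me bound each of the two subtracted terms from below by replacing every $\card{\ecut{X_k}}$ by $\card{\ecut{X_j}}$; the resulting coefficient of $\card{\ecut{X_j}}$ is $g(j) - g(\alpha(w)+1) - 2\bigl(j - \alpha(w) - 1\bigr)$, which vanishes identically because $g(j) - g(\alpha(w)+1) = 2\bigl(j - \alpha(w) - 1\bigr)$. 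Therefore $\Delta \ge 0$.

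The main obstacle is that the cost change of an individual edge can be negative---whenever the right endpoint of an edge moves left, or when both endpoints move, $\phi(e)$ strictly decreases---so no edge-by-edge monotonicity argument can work. These local decreases must be offset by the edges whose left endpoint moves, and the bookkeeping device that makes the trade-off transparent is precisely the reformulation in terms of the cut increments followed by summation by parts. The hypothesis on the $\card{\ecut{X_k}}$ is exactly what the telescoped expression needs to stay non-negative, and the exact cancellation of the coefficients of $\card{\ecut{X_j}}$ indicates that this hypothesis is essentially tight.
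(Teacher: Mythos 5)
Your proof is correct and takes essentially the same route as the paper's: the same per-edge cost changes $\pm(2x+c-1)$, the same regrouping of these changes by the moved positions, and the same identification of $d^+(k)-d^-(k)$ with the cut increment $\card{\ecut{X_k}}-\card{\ecut{X_{k-1}}}$. The only difference is that you finish with an explicit Abel summation and an exact cancellation of the coefficient of $\card{\ecut{X_j}}$, whereas the paper argues informally that the accumulated changes, scanned from position $j$ leftwards, stay non-negative throughout; your version makes that step precise.
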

\begin{proof}
For an edge $e = (u,v) \in E$ we may assume that $ \alpha(u)
< \alpha(v)$. We denote the contribution of an edge $e$ to the change
of cost by $\Delta(e) \defby \phi_{\alpha'}(e) - \phi_\alpha(e)$.
Based on the positions of $u$ and $v$ in $\alpha$ relative to
$\alpha(w)$ and $j$, we now calculate $\Delta(e)$; there are six
cases to be considered (see Fig. \ref{fig:move_iso_right}).
\begin{align*}
    \alpha(u) < \alpha(v) < \alpha(w)
        & \Rightarrow \Delta(e) = 0\\
    \alpha(u) < \alpha(w) \wedge j < \alpha(v)
        & \Rightarrow \Delta(e) = 0\\
    \alpha(u) < \alpha(w) < \alpha(v) \le j
        & \Rightarrow \Delta(e) = -(2 \alpha(v) + c - 1)\\
    \alpha(w) < \alpha(u) < \alpha(v) \le j
        & \Rightarrow \Delta(e) = +(2\alpha(u) + c - 1) - (2\alpha(v) + c - 1)\\
    \alpha(w) < \alpha(u) \le j < \alpha(v)
        & \Rightarrow \Delta(e) = +(2\alpha(u) + c - 1)\\
    j < \alpha(u) < \alpha(v)
        & \Rightarrow \Delta(e) = 0
\end{align*}

We now quantify the global change of cost. For accounting purpose, it
is useful to associate a change of cost $\pm (2 \alpha(x) + c - 1)$
with the vertex $x$ (all cost changes are of that form).  Notice that
only vertices $x \in V$ with $\alpha(w) < \alpha(x) \le j$ can have
associated a change of cost with them.  Moving from the $j$-th
position in the arrangement to the left back to position
$\alpha^{-1}(w) + 1$, we pick up a positive change at vertex $x$ if
and only if $\card{\ecut{X_{\alpha(x)}}} >
\card{\ecut{X_{\alpha(x)-1}}}$ and a negative change if and only if
$\card{\ecut{X_{\alpha(x)}}}  < \card{\ecut{X_{\alpha(x)-1}}}$.  If
the size of the cut does not change at $x$, neither does the cost
change (changes may cancel at that vertex though).

Since none of the cuts on the left of $j$ exceeds the size of the cut
$\ecut{X_j}$ and the absolute value of each change is strictly
decreasing as we move to the left, the sum of accumulated changes
stays non-negative throughout until we reach position $\alpha(w) + 1$.
But by reaching that position we have accounted for all changes due to
the reordering, so we have $\quadcost(\alpha') \ge \quadcost(\alpha)$.
\end{proof}

Lemma \ref{lemma:move_iso_right} describes circumstances that allow
moving a single isolated vertex from the left into a locally largest
cut without decreasing the arrangement costs.  Unfortunately, moving
isolated vertices from the right of that cut is not as easy.  In fact
the cost can decrease if we move such a single isolated vertex in a
position where it intersperses the cut\opt{arxiv}{ (see Appendix
\ref{app:ex_move_left})}.  But there are conditions under which we
can move a \emph{block} of isolated vertices from the right as the
following lemma shows.

\begin{figure}
    \begin{center}
        \includegraphics[width=.6\textwidth]{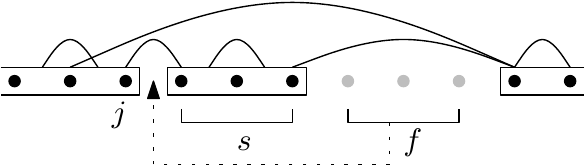}
        \caption{Illustration for the proof of Lemma
        \ref{lemma:move_block_left}.  Edges symbolize the edge classes~$E_i$.
	\label{fig:move_block_left}}
    \end{center}
\end{figure}

\begin{lemma}\label{lemma:move_block_left}
Let $j,s,f \in \N$ be such that $1 \le j < j+s < j+s+f \le n$,
$\card{\ecut{X_j}} > \card{\ecut{X_{j+k}}}$ for $1\le k \le s+f$ and
$\{\alpha^{-1}(j+s+1), \dotsc, \alpha^{-1}(j+s+f)\} \subset V$ are
isolated vertices.  Define the ordering $\alpha'$ by
\begin{equation*}
    \alpha'(v) =
    \begin{cases}
        \alpha(v) & \text{if $\alpha(v) \le j$ or $\alpha(v) > j+s+f$}\\
        \alpha(v) - s &  \text{if $j+s < \alpha(v) \le j+s+f$}\\
        \alpha(v) + f &  \text{if $j < \alpha(v) \le j+s$}.
    \end{cases}
\end{equation*}
If $j+1 + \frac{c+f}{2} \ge \card{\ecut{X_{j+s+f}}}(s-1)$ then we have
$\quadcost(\alpha') \ge \quadcost(\alpha)$.
\end{lemma}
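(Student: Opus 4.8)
The plan is to follow the edge-by-edge accounting scheme of the proof of Lemma~\ref{lemma:move_iso_right}, but to exploit that the relocated block is isolated. Since the vertices at positions $j+s+1,\dots,j+s+f$ carry no edges, shifting them left is free, and the \emph{entire} change $\quadcost(\alpha')-\quadcost(\alpha)$ is caused by the block of (arbitrary) vertices at positions $j+1,\dots,j+s$ being moved right by $f$. First I would classify every edge incident to this middle block by the side on which its other endpoint sits; note that a block vertex can only have neighbours in positions $\le j$, inside the block, or in positions $>j+s+f$ (never in the isolated gap). Writing $\delta_p \defby (p+f)^2 + c(p+f) - p^2 - cp = f(2p+f+c)$ for the increment of the layout value $p^2+cp$ when a vertex at position $p$ is shifted right by $f$, a short computation gives the per-edge change as $+\delta_p$ for an edge to a fixed vertex on the left, $-\delta_p$ for an edge to a fixed vertex on the right, and $\delta_{p'}-\delta_p\ge 0$ for an internal edge between block positions $p<p'$.

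Next I would regroup the total change by block vertex rather than by edge. For the vertex at position $p$, each left neighbour contributes $+\delta_p$ and each right neighbour contributes $-\delta_p$, so the coefficient of $\delta_p$ is exactly $(\text{left-degree of }p)-(\text{right-degree of }p)$. The key observation is that this equals the cut decrement $\card{\ecut{X_{p-1}}}-\card{\ecut{X_p}}$, because passing from $X_{p-1}$ to $X_p$ removes precisely the left-neighbour edges of $p$ from the cut and inserts its right-neighbour edges. Setting $h_k\defby \card{\ecut{X_{j+k}}}$, this yields the closed form
\begin{equation*}
    \quadcost(\alpha')-\quadcost(\alpha) = \sum_{k=1}^{s}\bigl(h_{k-1}-h_k\bigr)\,\delta_{j+k}.
\end{equation*}

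The analytic heart is then a summation by parts. Since $\delta_{p+1}-\delta_p = 2f$ is constant, Abel summation collapses the sum to
\begin{equation*}
    f\Bigl[(h_0-h_s)(2j+2+f+c) + 2\sum_{k=1}^{s-1} h_k - 2 h_s(s-1)\Bigr].
\end{equation*}
Here I would use two facts. The interior cut sizes are non-negative, so the middle sum may be dropped; and the strict inequality $\card{\ecut{X_j}}>\card{\ecut{X_{j+s+f}}}$ together with $\card{\ecut{X_{j+s+f}}}=\card{\ecut{X_{j+s}}}=h_s$ (the isolated block leaves the cut unchanged) forces the integer gap $h_0-h_s\ge 1$. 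Hence the bracket is at least $(2j+2+f+c)-2h_s(s-1)$, which is non-negative exactly under the assumed bound $j+1+\frac{c+f}{2}\ge \card{\ecut{X_{j+s+f}}}(s-1)$, giving $\quadcost(\alpha')\ge\quadcost(\alpha)$.

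I expect the main obstacle to be the bookkeeping in the regrouping step: one must check that the internal block edges cancel correctly and that the coefficient of each $\delta_p$ is genuinely the cut decrement, after which the telescoping and the single scalar hypothesis do the rest. It is worth remarking that after the summation by parts only the endpoints $h_0$ and $h_s$ of the telescope survive, so the strict cut inequality is really used only at the single index $k=s$ (equivalently $k=s+f$); the intermediate cut conditions present in the hypothesis are not needed for this argument.
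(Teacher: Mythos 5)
Your proof is correct, and while it starts from the same per-edge bookkeeping as the paper (your $\delta_p=f(2p+f+c)$ increments are exactly the paper's $\Delta(e)$ formulas for its edge classes $E_3$, $E_4$, $E_5$, and the observation that only edges meeting the non-isolated middle block matter is the paper's $\Delta(e)=0$ for $E_1\cup E_2\cup E_6$), the aggregation step is genuinely different. The paper drops the internal-edge ($E_4$) contribution, bounds each $E_3$ term below by $f(2(j+1)+c+f)$ and each $E_5$ term above by $f(2(j+s)+c+f)$, and closes with the counting facts $\card{E_3}-\card{E_5}=\card{\ecut{X_j}}-\card{\ecut{X_{j+s+f}}}\ge 1$ and $\card{E_5}\le\card{\ecut{X_{j+s+f}}}$. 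You instead regroup by block vertex, identify the coefficient of $\delta_p$ with the cut decrement $\card{\ecut{X_{p-1}}}-\card{\ecut{X_p}}$, and apply Abel summation to get an \emph{exact} closed form for $\quadcost(\alpha')-\quadcost(\alpha)$ before discarding the non-negative interior terms; I have checked the regrouping (internal edges are absorbed correctly, since each contributes $+\delta_{p'}$ to its right endpoint and $-\delta_p$ to its left endpoint) and the summation by parts, and both are sound. What your route buys is an identity rather than a one-sided estimate, which makes transparent your closing remark that only the cut inequality at $k=s+f$ (equivalently $k=s$) is actually used --- though it is worth noting the paper's argument also only invokes the hypothesis at that single index, so this is a clarification rather than a strengthening unique to your method. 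Both proofs then finish with the identical scalar comparison against the assumed bound $j+1+\frac{c+f}{2}\ge\card{\ecut{X_{j+s+f}}}(s-1)$.
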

\begin{proof}
As in Lemma \ref{lemma:move_iso_right} we denote the change of cost
when passing from $\alpha$ to $\alpha'$ for an edge $e = (u,v) \in E$
by $\Delta(e)$ and we assume that $\alpha(u) < \alpha(v)$. Based on
the positions of the end points, the edges can be divided into
six disjoint sets (see Fig.~\ref{fig:move_block_left}):
\begin{align*}
    E_1 & \defby \{ (u,v) \in E \setcond \alpha(u) < \alpha(v) \le j\}\\
    E_2 & \defby \{ (u,v) \in E \setcond
        \alpha(u) \le j \wedge j+s+f < \alpha(v) \}\\
    E_3 & \defby \{ (u,v) \in E \setcond
        \alpha(u) \le j < \alpha(v) \le j+s \}\\
    E_4 & \defby \{ (u,v) \in E \setcond
        j < \alpha(u) < \alpha(v) \le j+s \}\\
    E_5 & \defby \{ (u,v) \in E \setcond
        j < \alpha(u) \le j+s < j+s+f < \alpha(v) \}\\
    E_6 & \defby \{ (u,v) \in E \setcond
        j+s+f < \alpha(u) < \alpha(v) \}
\end{align*}
From the definition of $\alpha'$, we see that $\Delta(e) = 0$, for $e
\in E_1 \cup E_2 \cup E_6$.  For the other three cases a short
calculation shows that
\begin{align*}
    e \in E_3 \Rightarrow \Delta(e) & = f(2 \alpha(v) + c + f),\\
    e \in E_4 \Rightarrow \Delta(e) & = 2 f (\alpha(v) -
    \alpha(u))\quad\text{and}\\
    e \in E_5 \Rightarrow \Delta(e) & = - f(2 \alpha(u) + c + f).
\end{align*}

We now derive a lower bound for the cost difference of $\alpha'$ and
$\alpha$.  We will use that
\begin{equation*}
    \card{E_3} - \card{E_5} = \card{E_3} +
        \card{E_2} - (\card{E_5}+ \card{E_2}) = \card{\ecut{X_j}} -
        \card{\ecut{X_{j+s+f}}} \ge 1,
\end{equation*}
as well as $\card{E_5} \le \card{\ecut{X_{j+f+s}}}$.
We immediately drop the non-negative contribution from
edges in $E_4$ and calculate
\begin{equation*}
\begin{split}
\quadcost(\alpha') - \quadcost(\alpha) & \ge
    \sum_{(u,v) \in E_3} f (2 \alpha(v) + c + f)
    - \sum_{(u,v) \in E_5} f (2 \alpha(u) + c + f)\\
& \ge \card{E_3} f (2(j+1) + c + f) - \card{E_5} f  (2 (j+s) + c + f^2)\\
& = (\card{E_3} - \card{E_5}) f (2 (j+1) + c + f) - \card{E_5} 2 f (s-1)\\
& \ge f(2(j+1) + c + f) - \card{\ecut{X_{j+s+f}}} 2 f (s-1)\\
& = f ( 2 (j + 1) + c + f - \card{\ecut{X_{j+s+f}}} 2 (s-1)).
\end{split}
\end{equation*}
By assumption we have $j+1 + \frac{c+f}{2} \ge
\card{\ecut{X_{j+s+f}}}(s-1)$, so the difference $\quadcost(\alpha') -
\quadcost(\alpha)$ is non-negative.
\end{proof}

\begin{theorem}
Let $c = c_2 X^2 + c_1 X + c_0$ be a polynomial of degree at most two
with non-negative integer coefficients. Then
$\probname{MaxCut} \propto \probname{OQA}(c)$.
\end{theorem}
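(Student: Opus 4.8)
The plan is to reduce \probname{MaxCut} to the maximisation version \probname{MaxOQA}($c$); since the preceding proposition shows \probname{MaxOQA}($c$) and \probname{MinOQA}($c$) to be equivalent, and the latter is just \probname{OQA}($c$), this is enough. Given a \probname{MaxCut} instance $(G,k)$ with $G=(V,E)$, $\card{V}=n$ and $\card{E}=m$, I would form $H \defby G + \overline{K_N}$, that is, $G$ together with a set of $N$ isolated vertices, where $N$ is a polynomial in $n$ to be fixed only at the very end (it will have to satisfy $N \gg mn$). Since $H$ has $n+N$ vertices, $c$ abbreviates the integer $c(n+N)$ throughout. The whole argument rests on the identity
\begin{equation*}
    \quadcost(\alpha) = \sum_{j=1}^{n+N-1} (2j + 1 + c)\, \card{\ecut{X_j}},
\end{equation*}
obtained by writing $\phi_\alpha(e) = \sum_{j=\alpha(u)}^{\alpha(v)-1}(2j+1+c)$ for an edge $e=(u,v)$ with $\alpha(u)<\alpha(v)$ and interchanging the two summations. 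As the added vertices carry no edges, $\ecut{X_j}$ is exactly the cut of $G$ induced by $X_j \cap V$; hence $\card{\ecut{X_j}} \le W$ for every $j$, where $W$ is the maximum cut of $G$, and the profile $j \mapsto \card{\ecut{X_j}}$ changes only at the at most $n$ positions occupied by $V$. Because the weights $2j+1+c$ increase with $j$, the cost is maximised by holding a large cut at the right-most positions, which a long block of isolated vertices is able to do.

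For the completeness direction I would take a cut $A\subseteq V$ with $\card{\ecut{A}} \ge k$ and use the canonical arrangement that lists $A$ on positions $1,\dots,\card{A}$, then all $N$ isolated vertices, then $V\setminus A$. Across the isolated block the induced cut is constantly $\card{\ecut{A}}$, so the identity yields $\quadcost(\alpha) \ge \card{\ecut{A}}\,L \ge kL$, where $L \defby (N+1)(N + 2\card{A} + c + 1)$ is the weight accumulated over the block and the two flanking sums are discarded as non-negative. Since $\card{A} \ge 1$ this is at least $kL_{\min}$ with $L_{\min} \defby (N+1)(N+c+3)$, so I would set the target value to $k' \defby k\,L_{\min}$.

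For the soundness direction, suppose $\quadcost(\alpha) \ge k'$ and fix a position $j^{*}$ at which the profile attains its maximum $W_\alpha \defby \max_j \card{\ecut{X_j}}$. This is where Lemmas~\ref{lemma:move_iso_right} and~\ref{lemma:move_block_left} do the work: every isolated vertex to the left of $j^{*}$ can be swept into $j^{*}$ by Lemma~\ref{lemma:move_iso_right}, whose hypothesis holds automatically because no intervening cut exceeds the global maximum, while every block of isolated vertices to its right can be pulled toward $j^{*}$ by Lemma~\ref{lemma:move_block_left}. None of these moves decreases $\quadcost$, and none changes any cut value, the moved vertices being edgeless; iterating produces an arrangement $\alpha^{*}$ with $\quadcost(\alpha^{*}) \ge \quadcost(\alpha)$ in which all isolated vertices form a single block whose induced cut is still $W_\alpha$, flanked by the vertices of $V$ split into a left part $A$ and a right part, so that $\card{\ecut{A}} = W_\alpha$. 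Applying the identity to this canonical $\alpha^{*}$ and bounding the two flanking sums (each of their at most $n$ cuts is at most $W_\alpha \le m$) gives $\quadcost(\alpha^{*}) \le W_\alpha L_{\max} + \bigo{m(nN + cn + n^2)}$ with $L_{\max} \defby (N+1)(N + 2n + c - 1)$, so that $k' \le \quadcost(\alpha) \le \quadcost(\alpha^{*})$ forces $W_\alpha \ge k$ and $A$ is the required cut.

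What remains, and what I expect to be the crux, is the bookkeeping that makes the two thresholds compatible, namely $(k-1)L_{\max} + \bigo{m(nN + cn + n^2)} < k\,L_{\min}$. Using $L_{\max} - L_{\min} = \bigo{nN}$ and $k \le m$, this collapses to $N(N+c) \gg mn(N+c)$, i.e.\ to $N \gg mn$, an inequality from which $c$ has cancelled, so that a single polynomial choice such as $N \defby n^4$ secures it for every admissible $c$. The same largeness of $N$ is what one must use to discharge the numerical hypothesis of Lemma~\ref{lemma:move_block_left} during the sweep: a cost of order $k'$ can be accumulated only at positions of order $N$, so the relevant blocks sit at large positions and the inequality $j + 1 + \tfrac{c+f}{2} \ge \card{\ecut{X_{j+s+f}}}(s-1)$ holds there; making this last step precise is the one genuinely delicate point of the argument. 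Finally, $H$, the integer $c(n+N)$ and $k'$ are all computable in time polynomial in $n$, so the reduction is polynomial.
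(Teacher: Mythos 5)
Your construction, the completeness direction, the final counting, and in particular the identity $\quadcost(\alpha)=\sum_{j=1}^{n+N-1}(2j+1+c)\,\card{\ecut{X_j}}$ are all correct, and the overall strategy (pad with isolated vertices, canonicalize via Lemmas~\ref{lemma:move_iso_right} and~\ref{lemma:move_block_left}, then count) is the paper's. The gap is exactly at the point you flagged, and it is not mere bookkeeping: the claim that ``the relevant blocks sit at large positions'' is false, so the right-hand sweep cannot be justified as you describe. Concretely, take $c$ to be the zero polynomial, $k=3$, and let $G$ be the disjoint union of a star $K_{1,4}$ and three vertex-disjoint paths of length $r$ (so $n=3r+8$). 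Arrange $H$ with the star center at position $1$ (the profile's unique maximum $W_\alpha=4$, so $j^{*}=1$), the four leaves and the first two path layers at positions $2$--$11$, a \emph{single} isolated vertex at position $12$, and the remaining layers stretched out to position $n+N$ by large isolated blocks. The profile equals $3$ on all of $[8,\,n+N-3]$, so $\quadcost(\alpha)\ge 3\bigl((n+N-2)^2-64\bigr) > 3(N+1)(N+3)=k'$; yet the first pull of your sweep, the singleton block at position $12$ with $j=j^{*}=1$, $s=10$, $f=1$, needs $j+1+\tfrac{c+f}{2}=2.5 \ge \card{\ecut{X_{12}}}(s-1)=27$, which fails. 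High cost does force large cuts at positions of order $N$, but it does not prevent tiny isolated blocks from sitting at tiny positions, and those must still be merged to reach your canonical form; so there are cost-$\ge k'$ arrangements on which your canonicalization stalls at its first step.

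This is precisely the difficulty the paper's proof is engineered around, and the missing idea is the \emph{order} of the moves: the paper first pushes isolated vertices right into the rightmost locally largest cuts (creating a block structure with strictly decreasing cuts), then moves the \emph{largest} block first --- by pigeonhole it has size at least $4n^3$, so its size $f$ alone discharges the numerical hypothesis regardless of $j$ --- cascading it leftward until a huge block sits at the front; only then, with $j\ge b_1\ge 4n^3$, are the remaining (possibly singleton) blocks pulled in. Adopting that order repairs your argument (in my example, pulling a big far block over the stranded singleton works, since the lemma allows the $s$-segment to contain isolated vertices). But it is worth noting that your identity makes the entire sweep unnecessary: every $\ecut{X_j}$ in $H$ is a cut of $G$, so for \emph{any} arrangement $\quadcost(\alpha)\le W_\alpha\sum_{j=1}^{n+N-1}(2j+1+c)=W_\alpha(N+n-1)(N+n+1+c)$, and since $(N+n-1)(N+n+1+c)-(N+1)(N+3+c)=(n-2)(2N+n+2+c)$, the choice $N=n^4$ together with w.l.o.g.\ $1\le k\le m$ gives $k(N+1)(N+3+c)>(k-1)(N+n-1)(N+n+1+c)$; hence $\quadcost(\alpha)\ge k'$ forces $W_\alpha\ge k$ by integrality, with no rearrangement at all. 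That one-line contrapositive would replace both lemmas and is where your approach, pushed to its natural conclusion, genuinely simplifies the paper's.
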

\begin{proof}
Let $(G'=(V',E'); k')$ be an instance of \probname{MaxCut}.  We define
an instance $(G=(V,E); k)$ for \probname{OQA} by adding $n^5$
isolated vertices to $G'$:  Let $W$ be set of size $n^5$, then we set
\begin{equation*}
V = V' \cup W, \quad E = E'\quad \text{and}\quad k = n^{10} k'.
\end{equation*}

Assume that $G'$ admits a cut $\ecut{X'}$ of size at least $k'$.  We
define an ordering $\alpha : V \rightarrow \nset{n + n^5}$ for $G$ by
\begin{equation*}
\begin{split}
\alpha(X') & = \{1,\dotsc, \card{X'}\}\\
\alpha(W) & = \{\card{X'} + 1, \dotsc, \card{X'} + n^5\}\\
\alpha(V' \setminus X') & = \{\card{X'} + n^5 + 1, \dotsc, n^5 + n\},
\end{split}
\end{equation*}
where the ordering within the sets $X', W$ and $V\setminus X'$ is
arbitrary.  We now derive a lower bound for $\quadcost(\alpha)$:
Every edge $e \in \ecut{X'}$ induces a cost of at least
\begin{equation*}
\begin{split}
\phi(e) & \ge (n^5 + 2)^2 + c(n^5 + 2) - 1^2 - c\cdot1\\
& = n^{10} + (4+c)n^5 + c + 3,
\end{split}
\end{equation*}
so
\begin{equation*}
\begin{split}
\quadcost(\alpha) & = \sum_{e \in E} \phi(e)
        \ge \sum_{e \in \ecut{X'}} \phi(e)
        \ge  (n^{10} + (4+c) n^5 + c + 3) \card{\ecut{X'}}\\
    & \ge n^{10} k' = k.
\end{split}
\end{equation*}

For the reverse direction assume that we are given an ordering $\alpha
: V \rightarrow \nset{n + n^5}$ such that $\quadcost(\alpha) \ge k$.  In
order to show that $G'$ has a cut of size at least $k'$, we will first
rearrange $\alpha$, without decreasing the ordering cost, so that the
vertices in $W$ are ordered consecutively.  This reordering process
has two stages: First, using Lemma \ref{lemma:move_iso_right}, we will
move isolated vertices to the right so that they intersperse with
locally largest cuts.  This yields a block structure of isolated
vertices of $W$ to which we will then apply Lemma
\ref{lemma:move_block_left} in a second step.

For the first stage, let $b_1$ be the largest index of a maximum cut
among the cuts $\ecut{X_i}$, that is,
\begin{equation*}
    b_1 = \max \{\argmax_{1\le i\le n^5+n} \card{\ecut{X_i}}\}.
\end{equation*}
Among the $b_1$ vertices in $X_{b_1}$ denote by $n_1$ the number of
vertices from $V'$ and by $f_1$ the number of vertices from W, so
$n_1 = b_1 + f_1$.  By Lemma \ref{lemma:move_iso_right}, we can
rearrange $\alpha$ so that $\alpha^{-1}(\{1,\dotsc,n_1\}) \subseteq
V\setminus W$ and $\alpha^{-1}(\{n_1 + 1, \dotsc, n_1 + f_1\})
\subseteq W$ without decreasing the cost.

Iterating this procedure on the vertices ordered after $b_1$, we
obtain an ordering in which the vertices appear partitioned in $h$
parts, where in each part the vertices of $V'$ and $W$ are ordered
consecutively (see Fig.  \ref{fig:block_structure}).  More formally,
the ordering has the following properties:
\begin{gather*}
    0 \bydef b_0 < b_1 < b_2 < \dotsb < b_h = n + n^5,\\
    b_k = \max \{\argmax_{b_{k-1} < i\le n^5+n}
\card{\ecut{X_i}}\},\quad 1\le k \le h,\\
    \card{\ecut{X_{b_1}}} > \card{\ecut{X_{b_2}}} > \dotsb
        > \card{\ecut{X_{b_h}}} = 0,\\
    n_k + f_k = b_k - b_{k-1}, \quad 1 \le k \le h,\\
    \sum n_k = n, \quad \sum f_k = n^5,\\
    \alpha^{-1}(\{b_{k-1} + 1, \dotsc, b_{k-1} + n_k \})
        \subseteq V\setminus W,\quad 1 \le k \le h,\\
    \alpha^{-1}(\{b_{k-1} + n_k + 1, \dotsc, b_{k-1} + n_k + f_k \})
        \subseteq W,\quad 1 \le k \le h
\end{gather*}

\begin{figure}
    \begin{center}
        \includegraphics[width=.6\textwidth]{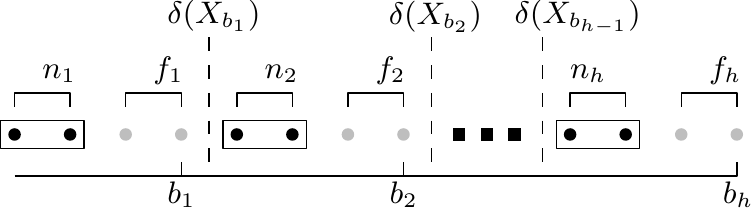}
        \caption{Illustration for the block structure arising from
        moving isolated vertices closest to their rightmost largest cut.
        \label{fig:block_structure}}
    \end{center}
\end{figure}

Note that some of the $f_k$ may be zero but all $n_k > 0$.  Since
$\card{\ecut{X_{b_{1}}}}$ is trivially bounded by the linear cutwidth
of the complete graph on $n$ vertices and the size of the cuts
$\ecut{X_k}$ is strictly decreasing, we obtain $h \le \frac{n^2}{4}$.

Now begins the second stage of the rearrangement.  From the given
block structure, we will perform a series of rearrangements using
Lemma \ref{lemma:move_block_left} until eventually all vertices from
$W$ intersperse between the sets $X_{b_1}$ and $\overline{X_{b_1}}$.
Each of the reordering operations will maintain the block structure as
a whole, but the individual values of the $f_k$ will change.  In order
to simplify notation, we will not explicitly distinguish between
different orderings $\alpha$ and values $f_k$'s at the different
stages during the process.

Let $\nu \in \argmax_{1 \le k \le h} f_k$; since $\sum f_k = n^5$ and $h
\le \frac{n^2}{4}$, we have $f_\nu \ge 4n^3$.  Define $j \defby b_{\nu
- 1}, s \defby n_\nu, f \defby f_\nu$.  By construction we have that
$\card{\ecut{X_j}} > \card{\ecut{X_{j+k}}}$ for $1\le k \le s + f$ and
\begin{equation*}
    j + 1 + \frac{c+f}{2} \ge \frac{f}{2} \ge 2n^3 \ge \frac{n^2}{4} (n-1) \ge
        \card{\ecut{X_{j+s+f}}}(s-1).
\end{equation*}
So the assumptions of Lemma \ref{lemma:move_block_left} are met and in
the rearranged ordering we now have $f_{\nu - 1} \ge 4n^3$ and $f_\nu
= 0$.  By induction we obtain an ordering in which the block structure
satisfies $f_1 \ge 4n^3$ and $f_2 = \dotsb = f_\nu = 0$.

Next set $j\defby b_1 \ge 4n^3$, $s \defby \sum_{k=2}^\nu (n_k + f_k)
+ n_{\nu + 1} = \sum_{k=2}^{\nu + 1} n_k  \le n$ and $f \defby f_{\nu
+ 1}$.  By construction we have that $\card{\ecut{X_j}} >
\card{\ecut{X_{j+k}}}$ for $1\le k \le s + f$ and
\begin{equation*}
    j + 1 + \frac{c+f}{2} \ge j \ge 4n^3 \ge \frac{n^2}{4} (n-1)
        \ge \card{\ecut{X_{j+s+f}}}(s-1).
\end{equation*}
This permits us to apply Lemma \ref{lemma:move_block_left} and in the
rearranged ordering we now have $f_{\nu + 1} = 0$ while $f_1 \ge 4n^3$
is maintained.  By induction we arrive at an ordering where $f_2 =
\dotsb = f_h = 0$, which implies $f_1 = n^5$.  Denote this final
ordering by $\alpha'$.  Since none of the reordering operations has
ever decreased the total arrangement cost, we have $\quadcost(\alpha')
\ge \quadcost(\alpha) \ge k$, where $\alpha$ is the very original
ordering that we started with.

Next we derive an upper bound for $\quadcost(\alpha')$.  We
classify the edges of $G$ in three different categories and bound the
contribution from each of these sources.
\begin{asparaenum}

\item If $(u,v) \in X_1 \times X_1$, then the total cost of these edges
is strictly bounded by the arrangement cost of a clique of size $n$
being ordered at positions $1,\dotsc, n$.  By Lemma
\ref{lemma:quadcost_clique_trans} (with $r=0$), this cost is $\frac{1}{6} n(n^2 - 1)(c+n+1)$.

\item If $e = (u,v) \in X_1 \times \overline{X_1}$, then the cost
implied by $e$ is at most $(n^5 + n)^2 + c(n^5 + n)- 1^2 - c$.

\item If $(u,v) \in \overline{X_1} \times \overline{X_1}$, then the
total cost of these edges is strictly bounded by the arrangement cost
of a clique of size $n$ being ordered at positions $n^5 + 1,\dotsc,
n^5 + n$.  By Lemma \ref{lemma:quadcost_clique_trans}, this cost is
$\frac{1}{6} n (n^2 - 1)(2n^5 + c + n+1)$.
\end{asparaenum}
In total we obtain
\begin{align*}
    n^{10} k' & = k \le \quadcost(\alpha) \le \quadcost(\alpha')
        \displaybreak[1]\\
    & \le \card{\ecut{X_1}} ((n^5 + n)^2 + c(n^5 + n)- 1 - c)
        + \frac{n}{6}(n^2 -1) (c + n+1)\\
        & \quad + \frac{n}{6} (n^2 - 1) (2n^5 + c + n + 1)
        \displaybreak[1]\\
    & \le \card{\ecut{X_1}} (n^{10} + 2n^6 + cn^5 + n^2 + cn) 
        + \frac{1}{3} n(n^2) (n^5 + c + n + 1)
        \displaybreak[1]\\
    & \Rightarrow k' \le \card{\ecut{X_1}} +
        \underbrace{\card{\ecut{X_1}}
        \frac{2n^6 + cn^5 + n^2 + cn}{n^{10}}
            + \frac{n^8 + cn^3 + n^4 + n^3}{3n^{10}}}_{\bydef r(n)}.
\end{align*}
Since $\card{\ecut{X_1}} \le \frac{n^2}{4}$, we have
\begin{equation*}
r(n) \le \frac{1}{2n^2} + \frac{c}{4n^3} + \frac{1}{4n^6} + \frac{c}{4n^7}
    + \frac{1}{3n^2} + \frac{c}{3n^7} + \frac{1}{3n^6} + \frac{1}{3n^7}.
\end{equation*}
Because $c$ is a polynomial of degree at most two, there exists an
integer $n_c \in \N$ such that
\begin{equation*}
r(n) < 1,\quad \text{for all}\quad n \ge n_c.
\end{equation*}
Together with the integrality of $\card{\ecut{X_1}}$ and $k'$, it
follows that $\card{\ecut{X_1}} \ge k'$.
\end{proof}

\subsection{Reduction from OQA to the minimum FLOPs problem}
\label{sec:QCC}

In this section we reduce OQA(c) to the minimum FLOPs problem for a
certain polynomial $c$. Our strategy follows the pattern that
Yannakakis used for the reduction of OLA to minimum
fill~\cite{Yannakakis:1981}, but again the details are much different.
In particular we employ a quadratic variation of the bipartite chain
graph completion problem, which we discuss in section
\ref{sec:reduction_from_qcc}.  In section \ref{sec:reduction_from_oqa}
we give a reduction from OQA(c) to this quadratic chain completion
problem.

\subsubsection{Reduction from bipartite quadratic chain completion}
\label{sec:reduction_from_qcc}

Let $G = (P, Q, E)$ be a bipartite graph on $p+q$ vertices, $p
\defby \card{P}, q \defby \card{Q}$.  Recall that for a vertex $v \in
P$ we denote its neighbourhood in $G$ by $\nhd{v}$.  $G$ is a
\emph{bipartite chain graph} if there exists a bijection $\alpha : P
\rightarrow \nset{p}$ such that
\begin{equation}
    \nhd{\alpha^{-1}(i)} \supseteq \nhd{\alpha^{-1}(i+1)},\;
    1 \le i \le p-1.
\end{equation}
Note that $G$ admits such a chain ordering for $P$ if and only if $G$
admits a chain ordering for $Q$, so the definition does not depend on
a particular partition of $G$.  For a bipartite graph, the property of
being a chain graph is hereditary and the minimal obstruction set is
$\{2K_2\}$ \cite[Lemma 1]{Yannakakis:1981}.

Yannakakis considers the problem of completing a given bipartite graph
into a bipartite chain graph.  We formulate the corresponding decision
problem in terms of vertex degrees:

\begin{center}
\framebox{
\begin{minipage}{.8\textwidth}
\probname{BipartiteChainCompletion (BCC)}\\
Instance: Bipartite graph $G=(P, Q, E), k\in \N$\\
Question: Is there a set of edges $F \subseteq P \times Q$ such that
$G^+ = (P, Q, E \cup F)$ is a chain graph and 
$\sum_{v \in P} \vdeg[G^+]{v} \le k$ ?
\end{minipage}
}
\end{center}

Note that our metric of measuring the cost of the chain completion is
equivalent to minimizing $\card{F}$ in the formulation above, because
\begin{equation*}
\sum_{v \in P} \vdeg[G^+]{v} = \card{E} + \card{F}.
\end{equation*}

Our quadratic variation of the bipartite chain completion problem has
a cost function which is a quadratic function of the vertex degrees in
the augmented graph.
\begin{center}
\framebox{
\begin{minipage}{.8\textwidth}
\probname{QuadraticChainCompletion (QCC)}\\ Instance: Bipartite graph
$G=(P, Q, E)$ on $p+q$ vertices ($p=\card{P}$, $q=\card{Q}$) where the partition $P$ is
designated, $k\in \N$\\ Question: Is there a set of edges $F
\subseteq P \times Q$ such that $G^+ = (P, Q, E \cup F)$ is a
chain graph with
\begin{equation*}
    \qcc(F) \defby
    \sum_{v \in P} \vdeg[G^+]{v}^2 + 2(p+1) \sum_{v\in P}
\vdeg[G^+]{v} \le k?
\end{equation*}
\end{minipage}
}
\end{center}

Unlike for BCC, it is not clear whether the minima of our quadratic
variation depend on the particular vertex partition chosen, which is
why the information which partition to consider is part of the input.
Of course, the particular cost value (defined by $\qcc$) of a
bipartite chain graph embedding depends on the partition (for
example, consider the simple path on three vertices).

The reduction from BCC to \probname{MinimumFill} in
\cite{Yannakakis:1981} involves a construction that relates certain
triangulations to chain embeddings, which we adapt to our needs by
augmenting it with an additional vertex set $U$:

\begin{definition}\label{def:c_of_g}
Let $G = (P, Q, E)$ a bipartite graph on $p+q$ vertices and $U
= \{u_v \setcond v \in P\}$ a set of $p$ vertices. We define the graph
$C = C(G) = (V', E')$ by
\begin{align*}
    V' & = P \cup Q \cup U\\
    E' & = E \cup (P \times P) \cup ((Q \cup U) \times (Q \cup U)) \cup
        \{(v, u_v) \setcond v \in P\}.
\end{align*}
Further, for a given bijection $\alpha : P \rightarrow \nset{p}$, we define
the set
\begin{equation*}
    G(\alpha) =
        \{(\alpha^{-1}(i), u_{\alpha^{-1}(j)}) \setcond 1 \le i < j \le p\}
        \subset P \times U.
\end{equation*}
\end{definition}

\begin{figure}[t]
    \begin{center}
        \subfigure[$G=(P,Q,E)$]{%
            \includegraphics{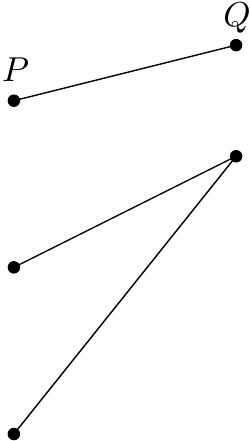}%
            \label{fig:qcc_to_minflops-a}%
        }%
        \hfill
        \subfigure[$C=C(G)$]{%
            \includegraphics{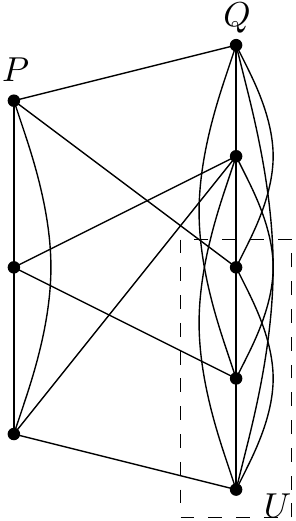}%
            \label{fig:qcc_to_minflops-b}%
        }%
        \hfill
        \subfigure[$C^+$, a triangulation of $C$]{%
            \includegraphics{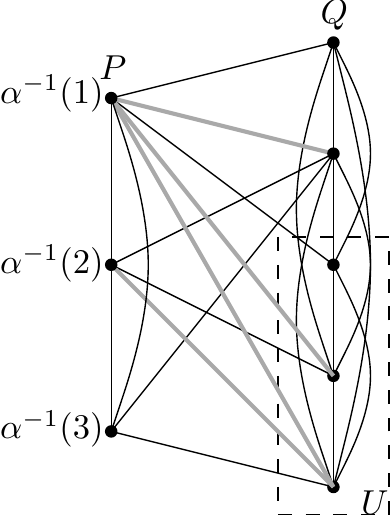}%
            \label{fig:qcc_to_minflops-c}%
        }%
    \end{center}
    \caption{Illustration for the reduction from QCC to minimum FLOPs.
    Figs.~\ref{fig:qcc_to_minflops-a} and \ref{fig:qcc_to_minflops-b} show
    the construction of $C(G)$. Notice that the two topmost vertices
    of $P$ and the vertices of $Q$ induce a $2K_2$ in $G$ and a
    chordless cycle in $C$.  Fig.~\ref{fig:qcc_to_minflops-c} shows a
    triangulation of $C$; the fill edges are shown in gray.  The
    topmost fill edge turns $G$ into a chain graph with $P$-chain
    ordering $\alpha$, the other three fill edges constitute
    $G(\alpha)$. $\alpha^R$ is a prefix of a PEO for $C^+$.
    \label{fig:qcc_to_minflops}}
\end{figure}

Figs.~\ref{fig:qcc_to_minflops-a} and \ref{fig:qcc_to_minflops-b}
give an example for the construction of $C(G)$.  The next lemmas describe how
chain completions of $G$ relate to triangulations of $C(G)$ and
$G(\alpha)$, giving an analogon to~\cite[Lemma 2]{Yannakakis:1981}.
Fig.~\ref{fig:qcc_to_minflops-c} illustrates this relationship.

\begin{definition}
Let $M$ be a set of $m$ elements and $\alpha : M \rightarrow \nset{m}$
a bijection.  Then the reverse bijection $\alpha^R : M \rightarrow
\nset{m}$ is uniquely defined by the property $\alpha^{-R}(i) \defby
(\alpha^{R})^{-1}(i) = \alpha^{-1}(m-i+1)$ for $1\le i \le m$.
\end{definition}

For the following we recall that a \emph{minimal} triangulation for a
graph is an \emph{inclusion minimal} set of edges whose addition
yields a chordal graph.  Analogously we will speak of minimal chain
completions for a given bipartite graph.  There is no loss of
generality if we assume that the decision problems from above are
restricted to minimal completions.  Recall also that a PEO for a graph
$G = (V,E)$ is a bijection $\alpha : V \rightarrow \nset{n}$, $n =
\card{V}$, such that eliminating vertices in the order implied by
$\alpha^{-1}$ does not cause any fill.  By a \emph{prefix} of a PEO
$\alpha$ we mean a restriction $\alpha_{|_W}$ for some $W \subset V$
such that $\alpha^{-1}(k) = \alpha^{-1}_{|_W}(k)$, for $1 \le k \le
\card{W}$.

\begin{lemma} \label{lemma:charac_minimal_triangulation}
Let $G=(P, Q, E)$ be a bipartite graph, $C = C(G) = (V', E') =
(P \cup Q \cup U, E')$ and $F' \subseteq V' \times V'$ a minimal
triangulation of $C$.  Set $F'_U \defby F' \cap (P \times U), F'_Q
\defby F' \cap (P \times Q)$.  Then there exists a bijection $\alpha :
P \rightarrow \nset{p}$ such that
\begin{asparaenum}[i)]
    \item $F'_U = G(\alpha)$\label{enum:fill_u_part},
    \item $(P, Q, E \cup F'_Q)$ is a chain graph and admits
    $\alpha$ as a chain ordering for $P$.\label{enum:fill_q_part}
\end{asparaenum}
\end{lemma}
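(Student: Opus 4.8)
The plan is to peel the problem apart into a structural observation about where fill edges can live, a reduction of chordality to a bipartite chain condition, and a minimality argument that pins down the fill exactly. Since $P$ and $Q\cup U$ are cliques in $C$, and every fill edge added to form $C^+ \defby (V', E'\cup F')$ lies between these two cliques, the only non-edges of $C$---hence the only possible fill edges---sit in $(P\times Q)\setminus E$ or in $(P\times U)\setminus\{(v,u_v)\setcond v\in P\}$. Thus $F' = F'_Q \dunion F'_U$, the cliques $P$ and $Q\cup U$ persist in $C^+$, and everything is governed by the bipartite graph $H$ between $P$ and $Q\cup U$ whose edge set is $E$ together with the matching $\{(v,u_v)\}$ and all of $F'$.

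First I would show that $C^+$ is chordal if and only if $H$ is a bipartite chain graph. In any chordless cycle of length at least four, at most two vertices can come from each clique (a third would be adjacent to the other two and supply a chord), so such a cycle has length exactly four, with two vertices on each side. Because the two clique edges joining the same-side pairs are present, they cannot be the missing diagonals; a one-line case check then forces the crossing pattern, which is exactly an induced $2K_2$ of $H$. By the obstruction set $\{2K_2\}$ for bipartite chain graphs, these disappear precisely when $H$ is a chain graph.

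Next I would fix any chain ordering $\alpha : P \to \nset{p}$ for $H$. The chain property says that each $P$-side neighborhood $\nhd[C^+]{u}\cap P$, for $u\in Q\cup U$, is an initial segment $\{\alpha^{-1}(1),\dotsc,\alpha^{-1}(k_u)\}$ of the $\alpha$-order. Restricting $\alpha$ to the $Q$-neighborhoods shows at once that $(P,Q,E\cup F'_Q)$ is a chain graph with $\alpha$ a chain ordering for $P$, which is exactly item~\ref{enum:fill_q_part}. Applying the segment property to $u=u_v$ and using the matching edge $(v,u_v)$ gives $k_{u_v}\ge\alpha(v)$; the whole of item~\ref{enum:fill_u_part} now reduces to proving the reverse inequality $k_{u_v}\le\alpha(v)$.

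The crux, which I expect to be the main obstacle, is this minimality argument. Suppose $k_{u_v}>\alpha(v)$ for some $v$. Then the topmost incident edge $f=(\alpha^{-1}(k_{u_v}),u_v)$ is a genuine fill edge, since the matching edge sits at the interior position $\alpha(v)<k_{u_v}$. Deleting $f$ removes only the last element of the initial segment $\nhd[C^+]{u_v}\cap P$ and leaves every other $P$-side neighborhood unchanged; as initial segments are automatically nested, $H-f$ is still a chain graph and hence $C^+-f$ is still chordal, contradicting the minimality of $F'$ (the boundary case $k_{u_v}=p$ is harmless). Therefore $k_{u_v}=\alpha(v)$, so $\nhd[C^+]{u_v}\cap P=\{\alpha^{-1}(1),\dotsc,\alpha^{-1}(\alpha(v))\}$ and the fill edges at $u_v$ are exactly $\{(\alpha^{-1}(i),u_v)\setcond i<\alpha(v)\}$; collecting these over all $v$ yields $F'_U=G(\alpha)$, which is item~\ref{enum:fill_u_part}. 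The delicate point is that the removed edge must be the \emph{last} one of an initial segment: only then is the prefix structure---and with it chordality---preserved, so that a single deletion exposes the non-minimality.
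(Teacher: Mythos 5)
Your proof is correct, but it takes a genuinely different route from the paper's. The paper argues through perfect elimination orderings: since $F'$ is minimal, it picks a PEO $\beta$ of $C^+$ that orders the clique $Q\cup U$ last (invoking Rose's result that a PEO may be chosen to end with any given clique), then runs an induction over the first $p$ elimination steps to compute exactly which fill edges incident with $U$ are created, and finally sets $\alpha = (\beta|_P)^R$. You instead work purely structurally: you observe that $C^+$ is the join of two cliques across a bipartite graph $H$, prove that $C^+$ is chordal iff $H$ is $2K_2$-free (every chordless cycle must be a $4$-cycle with two vertices in each clique, which is precisely an induced $2K_2$ of $H$), fix any chain ordering of $H$ so that every $(Q\cup U)$-side neighborhood is an initial segment of $P$, and then use inclusion-minimality of $F'$ locally --- deleting the topmost fill edge at $u_v$ preserves the initial-segment structure, hence chordality --- to force $\nhd{u_v}\cap P$ to end exactly at $v$, which yields $F'_U = G(\alpha)$; item (ii) falls out by restricting the nested neighborhoods to $Q$. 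Your version is more elementary and self-contained (no PEO machinery, no elimination-step induction) and makes the role of minimality completely explicit; the paper's version has the advantage of directly exhibiting $\alpha^R$ as a prefix of a PEO, which is the form in which the correspondence is exploited in the subsequent FLOP-counting theorem (though that fact is re-established anyway in the companion lemma). One small point worth making explicit in your write-up: the single $\alpha$ you fix as a chain ordering of the full graph $H$ is the one that witnesses both items simultaneously, which is what the lemma requires.
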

\begin{proof}
Since $P$ and $Q \cup U$ are already cliques in $C$, we have $F'
\subseteq P \times (Q \cup U)$, so $F' = F_U' \cup F_Q'$ is a
partitioning of $F'$.  Since $F'$ is minimal, there exists a PEO
$\beta$ for $C^+$ such that $C_\beta^+ = C^+$, and because $Q\cup U$
is a clique in $C^+$, we can choose $\beta$ so that it orders $Q\cup
U$ last \cite[Corollary 4]{Rose:1972}, that is,
\begin{equation*}
    \beta^{-1} (\{1,\cdots,p\}) = P, \;
    \beta^{-1} (\{p+1, \cdots, 2p+q\}) = Q \cup U.
\end{equation*}

Denote by $N_j$ the neighborhood of the vertex $\beta^{-1}(j)$ in the
reduced elimination graph at step $j$, and $F'_j$ the set of fill edges
introduced at step $j$ that are incident with $U$. We will show the
following statement by induction (for $1\le j \le p$):  In the $j$-th
elimination step, we have
\begin{align*}
    N_j \cap (P \cup U)
        & = \{\beta^{-1}(i) \setcond j < i \le p\}
        \cup \{u_{\beta^{-1}(i)} \setcond 1 \le i \le j\},\\
    F'_j & = \{ (\beta^{-1}(i), u_{\beta^{-1}(j)}) \setcond j < i \le p\}.
\end{align*}

By inspection of the graph $C$ we find that the statement is true for
$j=1$.  Next assume that the statement is true for all $k$  with $1
\le k < j$.  By the induction assumption, the fill edges incident with
$U$ introduced up
to step $j$ are
\begin{equation}
    \bigcup_{k=1}^{j-1} F'_k
        = \bigcup_{k=1}^{j-1}
        \{ (\beta^{-1}(i), u_{\beta^{-1}(k)}) \setcond k < i \le p \}.
    \label{eqn:fill_up_to_j}
\end{equation}
So at the elimination step $j$, the set of vertices of $U$ that the
vertex $\beta^{-1}(j) \in P$ is adjacent to because of any prior fill
edge is $\{u_{\beta^{-1}(i)} \setcond 1 \le i < j\}$, so we obtain
\begin{equation*}
    N_j \cap (P\cup U) = \{ \beta^{-1}(i) \setcond j < i \le p \}
        \cup \{ u_{\beta^{-1}(i)} \setcond 1 \le i \le j\}.
\end{equation*}
Since the edges \eqref{eqn:fill_up_to_j} are already present at step
$j$, the only edges that need to be added in order to turn this set of
vertices into a clique 
\begin{equation*}
    \{ (\beta^{-1}(i), u_{\beta^{-1}(j)}) \setcond j < i \le p\}
    = F'_j,
\end{equation*}
which completes the proof of the claim.

Let $\alpha \defby (\beta|_P)^R$.  Noting that $F'_p = \emptyset$, it
follows from the claim that
\begin{align*}
    F' \cap (P\times U) & = \bigcup_{j=1}^{p-1} F'_k
    = \bigcup_{j=1}^{p-1}
        \{ (\beta^{-1}(i), u_{\beta^{-1}(j)}) \setcond j < i \le p \}\\
    & = \{ (\alpha^{-1}(i), u_{\alpha^{-1}(j)}) \setcond 1 \le i < j \le p \}
    = G(\alpha).
\end{align*}

Now we have constructed $\alpha$ and shown \eqref{enum:fill_u_part}.
To show \eqref{enum:fill_q_part}, note that $P$ is a clique in
$C(G)$ and $\alpha^R = \beta|_P$ is also a prefix of a PEO for the
induced subgraph $C^+[P\cup Q]$.  So by the construction of
$C$, $\alpha$ is a chain ordering for $P$ in in $(P, Q, E \cup
F_Q')$.
\end{proof}

The previous lemma characterizes minimal triangulations of $C(G)$:
They decompose into a chain completion for $G$ and a set $G(\alpha)$
such that $\alpha$ is a compatible chain ordering.  The next two
lemmas give a reverse direction, so every triangulation of $C(G)$
uniquely defines a chain completion of $G$ and vice versa.

\begin{lemma}[chordal patching lemma, folklore]\label{lemma:patching}
Let $G=(V,E)$ be a graph where the vertices are partitioned in three
disjoint sets $V = A \cup B \cup C$.  Then $G$ is chordal if the
following three conditions are satisfied:
\begin{enumerate}
    \item $G[V\setminus C]$ has two connected components $A, B$,
    \item $G[C]$ is a clique,
    \item $G[A\cup C]$ and $G[B\cup C]$ are chordal.
\end{enumerate}
\end{lemma}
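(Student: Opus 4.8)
The plan is to exhibit a perfect elimination ordering (PEO) for $G$ directly, reusing the machinery already invoked in the proof of Lemma~\ref{lemma:charac_minimal_triangulation}. The starting observation is that condition~1 says precisely that there is no edge between $A$ and $B$, so $C$ is a separator and the neighbourhood in $G$ of any vertex of $A$ is contained in $A\cup C$ (and symmetrically for $B$). Since $G[A\cup C]$ is chordal by condition~3 and $C$ is a clique in it by condition~2, Rose's reordering result \cite[Corollary 4]{Rose:1972} lets me pick a PEO of $G[A\cup C]$ that eliminates all of $A$ before any vertex of $C$; call its restriction to $A$ the ordering $\sigma_A$. Symmetrically I obtain $\sigma_B$, a PEO-prefix of $G[B\cup C]$ eliminating all of $B$ first. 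I then propose the global ordering that eliminates $A$ according to $\sigma_A$, then $B$ according to $\sigma_B$, and finally $C$ in arbitrary order (any order is perfect on a clique).

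First I would check that eliminating the vertices of $A$ in $G$ produces no fill and leaves behind exactly $G[B\cup C]$. The point is that when an $A$-vertex is eliminated its current neighbourhood lies entirely in $A\cup C$, so every fill edge it creates also lies in $A\cup C$; hence the elimination process restricted to $A$ is identical to the corresponding prefix of the PEO of $G[A\cup C]$ and is therefore fill-free. Moreover the only new edges among $B\cup C$ that these eliminations could create would be among $C$-vertices, but $C$ is already complete, so the graph induced on $B\cup C$ after eliminating $A$ is unchanged, namely $G[B\cup C]$. Eliminating $B$ and then $C$ along $\sigma_B$ is then fill-free by condition~3, which shows the whole ordering is a PEO and hence that $G$ is chordal.

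The only delicate point --- and the step I would write out most carefully --- is the bookkeeping in the paragraph above: one must argue that no fill escapes $A\cup C$ while $A$ is being eliminated and that $G[B\cup C]$ reappears untouched, which hinges entirely on the separation property (no edges between $A$ and $B$) together with $C$ being a clique. Everything else is routine once these two facts are in hand.

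As an alternative, purely combinatorial route I could argue via chords of cycles: take any cycle of length at least four, and if it stays inside $A\cup C$ or inside $B\cup C$ it has a chord by condition~3; otherwise it meets both $A$ and $B$, and since there is no edge between $A$ and $B$ it must cross $C$ at two distinct, non-consecutive vertices, which are adjacent because $C$ is a clique and hence furnish a chord. The mildly fiddly part there is verifying that the two clique vertices can be chosen non-consecutively on the cycle, which follows by collapsing the cyclic vertex sequence into maximal $A$-, $B$- and $C$-blocks and observing that at least two $C$-blocks must occur.
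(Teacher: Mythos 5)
Your primary argument is correct, but it takes a genuinely different route from the paper. The paper proves Lemma~\ref{lemma:patching} exactly by your ``alternative'' route: take any simple cycle $Z$ of length at least four; if $Z$ lies in $A\cup C$ or $B\cup C$ it has a chord by condition~3, and otherwise $Z$ meets both $A$ and $B$, hence must cross the clique $C$ at two non-consecutive vertices, yielding a chord. (The paper states the non-consecutiveness without proof; your block-collapsing justification is the right way to fill in that detail, since between any two distinct maximal $C$-blocks of $Z$ there is at least one $A$- or $B$-vertex on each arc.) Your main proof instead builds a perfect elimination ordering: use \cite[Corollary 4]{Rose:1972} to get clique-last PEOs of $G[A\cup C]$ and $G[B\cup C]$, concatenate $\sigma_A$, $\sigma_B$, then $C$, and verify by induction that fill from eliminating $A$ stays inside $A\cup C$ (indeed there is none) and that $G[B\cup C]$ survives intact. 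That bookkeeping is sound --- the key facts are precisely the absence of $A$--$B$ edges and the completeness of $C$ --- so your ordering is a PEO and $G$ is chordal by the standard PEO characterization. The trade-off: the paper's cycle argument is shorter and needs no machinery beyond the definition of chordality, whereas your construction invokes Rose's reordering result and the Fulkerson--Gross equivalence, but in exchange it is constructive, producing an explicit PEO of the patched graph of the same shape ($A$, then $B$, then $C$) that Lemma~\ref{lemma:chain_completion_to_triangulation} subsequently needs when it asserts that $\alpha^R$ is a prefix of a PEO for $C^+$; with your proof that assertion would come for free rather than requiring a separate argument.
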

\begin{proof}
Let $Z$ be a simple cycle of length at least $4$ in $G$.  If $Z$ is
entirely contained in $A\cup C$ or $B\cup C$, then $Z$ has a chord.
Otherwise, $Z$ contains vertices both of $A$ and $B$, so $Z$ intersects
$C$ at least at two non-consecutive vertices of $Z$, which gives a
chord in $Z$ since $C$ is a clique.
\end{proof}

\begin{lemma}\label{lemma:chain_completion_to_triangulation}
Let $G = (P, Q, E)$ be a bipartite graph and let $F \subseteq P\times
Q$ such that $G^+ = (P, Q, E \cup F)$ admits $\alpha : P
\rightarrow \nset{p}$ as a chain ordering.  Then $F' = F \cup G(\alpha)$ is
a triangulation for $C = C(G) = (V', E')$ and $\alpha^R$ is a prefix
of a PEO for $C^+ = (V', E' \cup F')$.
\end{lemma}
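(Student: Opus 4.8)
The plan is to establish both assertions simultaneously by exhibiting an explicit PEO for $C^+$ whose first $p$ eliminations coincide with $\alpha^R$. Concretely, I would eliminate the vertices of $P$ in the order prescribed by $\alpha^R$ — that is, $\alpha^{-1}(p), \alpha^{-1}(p-1), \dotsc, \alpha^{-1}(1)$ — and show that each such vertex is simplicial at the moment of its elimination, so that no fill beyond $F'$ is ever created. Once all of $P$ has been removed, the graph that remains is precisely the clique $Q \cup U$, which admits a trivial PEO; appending any ordering of $Q \cup U$ then yields a full PEO for $C^+$. Since $C^+$ admits a PEO it is chordal, whence $F'$ is a triangulation of $C$, and since the first $p$ positions of this PEO are exactly $\alpha^R$, the ordering $\alpha^R$ is a prefix of a PEO, as required.

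The core of the argument is an induction on the elimination step maintaining the invariant that, after eliminating $\alpha^{-1}(p), \dotsc, \alpha^{-1}(k+1)$ without fill, the reduced graph equals the induced subgraph $C^+[\{\alpha^{-1}(1), \dotsc, \alpha^{-1}(k)\} \cup Q \cup U]$. Under this invariant I would read off the neighborhood of $v = \alpha^{-1}(k)$ in the reduced graph, which splits into three groups: the surviving clique neighbors $\{\alpha^{-1}(1), \dotsc, \alpha^{-1}(k-1)\} \subseteq P$; the vertices $\{u_{\alpha^{-1}(j)} : j \ge k\} \subseteq U$ (coming from the base edge $(v, u_v)$ together with the $G(\alpha)$-edges emanating from $v$); and the chain-graph neighbors $\nhd[G^+]{v} \subseteq Q$. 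The task is to verify that the union of these three groups is a clique in $C^+$.

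The within-group edges are immediate: the $P$-part lies in the clique $P$, and both the $U$-part and the $Q$-part lie in the clique $Q \cup U$. The three cross-checks are where Definition~\ref{def:c_of_g} and the chain property enter. For a $P$–$U$ pair $(\alpha^{-1}(i), u_{\alpha^{-1}(j)})$ with $i < k \le j$ we have $i < j$, so the edge lies in $G(\alpha) \subseteq F'$. For a $P$–$Q$ pair $(\alpha^{-1}(i), w)$ with $i < k$ and $w \in \nhd[G^+]{v}$, the chain-ordering inclusion $\nhd[G^+]{\alpha^{-1}(i)} \supseteq \nhd[G^+]{\alpha^{-1}(k)}$ (valid since $i < k$) gives $w \in \nhd[G^+]{\alpha^{-1}(i)}$, so the edge is present. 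The $U$–$Q$ pairs again lie inside the clique $Q \cup U$. Thus $v$ is simplicial, its elimination adds no fill, and the invariant is restored for step $k-1$; the base case $k = p$ is the same computation, the index range $j \ge p$ there forcing the single $U$-vertex $u_v$.

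I expect the main obstacle to be organizational rather than conceptual: one must keep the orientation of the chain ordering straight. Because the containment $\nhd[G^+]{\alpha^{-1}(i)} \supseteq \nhd[G^+]{\alpha^{-1}(i+1)}$ makes low indices dominant while we eliminate high indices first, every surviving $P$-neighbor of $v$ has index below $k$ and hence a neighborhood containing that of $v$ — which is exactly what makes the $P$–$Q$ cross edges available without fill. The complementary $G(\alpha)$-edges supply precisely the $P$–$U$ cross edges in the same regime $i < j$. Keeping these two one-sided conditions aligned is the only delicate point; everything else reduces to reading off adjacencies from the construction of $C(G)$. Chordality alone could alternatively be obtained from Lemma~\ref{lemma:patching}, but the simplicial-elimination route has the advantage of delivering the PEO-prefix claim at no extra cost.
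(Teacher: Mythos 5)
Your proof is correct, and it takes a genuinely different route from the paper's. The paper proves chordality of $C^+$ by decomposition: it shows $C^+[P\cup Q]$ is chordal (a chordless cycle there would induce a $2K_2$ in $G^+$, contradicting the chain property), shows $C^+[P\cup U]$ is chordal by eliminating along $\alpha^R$, and then glues the two pieces together with the chordal patching lemma (Lemma~\ref{lemma:patching}) using the clique $P$ as separator; only afterwards does it remark that eliminating along $\alpha^R$ in $C^+$ creates no fill, so that $\alpha^R$ starts a PEO. You instead build a full PEO for $C^+$ outright, by induction on the elimination step: each $\alpha^{-1}(k)$, $k = p, p-1, \dotsc, 1$, is simplicial because its reduced neighborhood is $\{\alpha^{-1}(i) \setcond i < k\} \cup \{u_{\alpha^{-1}(j)} \setcond j \ge k\} \cup \nhd[G^+]{\alpha^{-1}(k)}$, and the only nontrivial adjacencies --- the $P$--$U$ pairs, supplied by $G(\alpha)$ since $i < k \le j$, and the $P$--$Q$ pairs, supplied by the chain inclusions --- are exactly the two aligned one-sided conditions you isolate; chordality and the prefix claim then come out in a single pass. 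Your route costs more bookkeeping, but it buys something real: the paper's appeal to Lemma~\ref{lemma:patching} is not literally licensed, because hypothesis~1 of that lemma fails here --- $Q \cup U$ is a clique in $C^+$, so deleting $P$ leaves \emph{one} connected component, not two. The cycles that the patching argument does not cover are precisely the mixed four-cycles $(p_1, p_2, u, q)$ with $u \in U$, $q \in Q$, and ruling these out needs exactly the $G(\alpha)$/chain-ordering alignment your cross-checks make explicit: if $p_2$ is adjacent to $u = u_{\alpha^{-1}(j)}$ and $p_1$ is not, then $\alpha(p_2) \le j < \alpha(p_1)$, whence $\nhd[G^+]{p_2} \supseteq \nhd[G^+]{p_1} \ni q$ and $(p_2, q)$ is a chord. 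So your unified simplicial-elimination argument is not merely an alternative; it is self-contained where the paper's proof quietly relies on this same alignment without spelling it out.
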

\begin{proof}
Let $C^+_Q  = C^+[P \cup Q]$ and $C^+_U = C^+[P \cup U]$.  We first
show that $C^+_Q$ and $C^+_U$ are chordal.  A chordless cycle in
$C^+_Q$ implies an induced subgraph in $G^+$ isomorphic to $2K_2$,
which contradicts the assumption that $G^+$ is a bipartite chain
graph.  So $C^+_Q$ is chordal.

From the definition of $G(\alpha)$ it follows that we can use
$\alpha^R$ to carry out $p$ steps of vertex elimination in $C^+_U$
without introducing a fill edge.  But after these $p$ steps only a
clique of size $p$ remains, so $C^+_U$ admits a PEO
which implies that $C^+_U$ is chordal.

Noting that $P$ is a clique in $C^+$, it follows from Lemma
\ref{lemma:patching} that $C^+$ is chordal.  Since $G^+$ is a chain
graph and since $P$ is a clique in $C^+$, no fill edge is introduced
when eliminating along $\alpha^R$.  Consequently, $\alpha^R$ is a
prefix of a PEO for $C^+$.
\end{proof}

The set $G(\alpha)$ in any triangulation $C^+$ of $C(G)$ simplifies
the FLOP counting in the reduction from
\probname{QuadraticChainCompletion} as we will see now.  

\begin{theorem}
\probname{QuadraticChainCompletion} $\propto$ \probname{MinimumFLOPs}.
\end{theorem}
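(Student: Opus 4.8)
The plan is to map a \probname{QCC} instance $(G=(P,Q,E);k)$ to the \probname{MinimumFLOPs} instance $(C(G);k')$, where $C=C(G)$ is the graph of Definition~\ref{def:c_of_g} and $k' \defby k + \gamma$ with $\gamma \defby p(p+1)^2 + \sum_{j=1}^{p+q} j^2$ a constant determined by $p = \card{P}$ and $q = \card{Q}$ alone. This map is plainly computable in polynomial time, so the content of the proof lies in establishing the equivalence of the two instances. The correspondence between chain completions of $G$ and minimal triangulations of $C$ is already provided by Lemmas~\ref{lemma:charac_minimal_triangulation} and~\ref{lemma:chain_completion_to_triangulation}; the remaining task is to express the FLOP count of such a triangulation in terms of the $\qcc$ cost of the associated chain completion.

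The heart of the argument is a single FLOP computation. Let $F \subseteq P\times Q$ be a chain completion of $G$ with chain ordering $\alpha$, and let $C^+ = (V', E' \cup F \cup G(\alpha))$ be the triangulation supplied by Lemma~\ref{lemma:chain_completion_to_triangulation}. Because the multiset of elimination degrees is an invariant of the triangulation, I would compute $\flopop{C^+}$ along the convenient PEO that eliminates $P$ in the order $\alpha^R$ and then eliminates the residual clique $Q\cup U$. By Lemma~\ref{lemma:chain_completion_to_triangulation}, eliminating $P$ along $\alpha^R$ creates no fill, so when the vertex $v \in P$ with $\alpha(v) = p-k+1$ is removed as the $k$-th vertex, its closed neighborhood in the reduced graph consists of the $p-k$ remaining vertices of $P$, exactly $k$ vertices of $U$ (the partner $u_v$ together with the $k-1$ vertices $u_{\alpha^{-1}(j)}$ with $j > \alpha(v)$, reached through $G(\alpha)$), its $\vdeg[G^+]{v}$ neighbours in $Q$, and $v$ itself. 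Hence the closed elimination degree of $v$ equals $(p-k) + k + \vdeg[G^+]{v} + 1 = p + 1 + \vdeg[G^+]{v}$, uniformly in $k$. Eliminating the residual clique $Q\cup U$ of size $p+q$ then contributes $\sum_{j=1}^{p+q} j^2$.

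Summing these contributions gives
\begin{equation*}
    \flopop{C^+} = \sum_{v\in P}\bigl(p+1+\vdeg[G^+]{v}\bigr)^2 + \sum_{j=1}^{p+q} j^2,
\end{equation*}
and expanding the square produces the constant $p(p+1)^2$, the cross term $2(p+1)\sum_{v\in P}\vdeg[G^+]{v}$, and the term $\sum_{v\in P}\vdeg[G^+]{v}^2$; the last two are exactly $\qcc(F)$, so $\flopop{C^+} = \qcc(F) + \gamma$. This is precisely the reason the cost in \probname{QCC} carries the coefficient $2(p+1)$: it is forced by the cross term of the square. Both directions of the reduction now follow. If $G$ has a chain completion $F$ with $\qcc(F)\le k$, then the triangulation of Lemma~\ref{lemma:chain_completion_to_triangulation} satisfies $\flopop{C^+} = \qcc(F) + \gamma \le k'$. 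Conversely, restricting \probname{MinimumFLOPs} to minimal triangulations without loss of generality (as noted earlier), a minimal triangulation $C^+$ of $C$ with $\flopop{C^+}\le k'$ decomposes by Lemma~\ref{lemma:charac_minimal_triangulation} as $F' = F'_Q \cup G(\alpha)$ with $(P,Q,E\cup F'_Q)$ a chain graph of chain ordering $\alpha$; since $C^+$ then coincides with the triangulation built from $(F'_Q,\alpha)$, the displayed identity yields $\qcc(F'_Q) = \flopop{C^+} - \gamma \le k$.

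I expect the elimination-degree count to be the main obstacle. One must verify two things: that no fill arises while eliminating $P$ along $\alpha^R$, which rests on the chain property (forcing $\nhd[G^+]{\alpha^{-1}(m)} \supseteq \nhd[G^+]{v}$ for every earlier-eliminated $P$-neighbour $\alpha^{-1}(m)$) together with the fact that $G(\alpha)$ already supplies the required $P$--$U$ edges; and that the $U$-neighbourhood of the $k$-th eliminated vertex has size exactly $k$, which is what makes the closed elimination degree collapse to the clean value $p+1+\vdeg[G^+]{v}$, independent of the elimination step. Once this count is in place, the expansion of the square and the bookkeeping of $\gamma$ are routine, and the reverse direction only additionally uses that \probname{MinimumFLOPs} may be restricted to minimal triangulations.
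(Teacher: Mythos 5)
Your proposal is correct and follows essentially the same route as the paper: the same instance map $(G;k)\mapsto(C(G);k+p(p+1)^2+\sum_{j=1}^{p+q}j^2)$, the same appeal to Lemmas~\ref{lemma:charac_minimal_triangulation} and~\ref{lemma:chain_completion_to_triangulation} for the bijection between chain completions and minimal triangulations, and the same computation showing the $k$-th vertex eliminated along $\alpha^R$ has closed elimination degree $p+1+\vdeg[G^+]{v}$, whose square expands to $\qcc(F)$ plus the constant. Your treatment of the reverse direction is, if anything, slightly more explicit than the paper's.
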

\begin{proof}
As before, we continue to use the notation from
Definition~\ref{def:c_of_g}.  By Lemmas
\ref{lemma:charac_minimal_triangulation} and
\ref{lemma:chain_completion_to_triangulation} every chain completion
$F$ of $G$ gives a triangulation $F' = F \cup G(\alpha)$ for $C(G)$
and vice versa.  Further, the chain orderings correspond to reversed
prefixes of PEOs and vice versa.  We show: There exists a chain
completion of cost at most $k$ if and only if we can triangulate
$C(G)$ with FLOP count of at most $k' \defby k + p(p+1)^2 +
\sum_{i=1}^{p+q} i^2$.

If $F$ is a set of edges whose addition to $G$ yields a chain graph
$G^+$ with chain ordering $\alpha$ for $P$, then $\alpha^R$ starts a PEO for
the corresponding triangulation of $C(G)$.  We will calculate the
elimination degrees.  At the $i$-th elimination step, the vertex
$\alpha^R(i)$ is adjacent to $p-i$ vertices in $P$,
$\vdeg[G^+]{\alpha^R(i)}$ vertices in $Q$ and $i$ vertices in $U$.  So
the $p$ elimination degrees associated with $\alpha^R$ are
\begin{equation}\label{eqn:elim_degree_qcc}
\begin{split}
    \vdeg{\alpha^R(i)} & = p - i + \vdeg[G^+]{\alpha^R(i)} + i\\
    & = p + \vdeg[G^+]{\alpha^R(i)}, \quad 1 \le i \le p.
\end{split}
\end{equation}
After the elimination of these first $p$ vertices, a clique of size
$p+q$ remains, so a PEO $\alpha'$ for $C^+$ is obtained by completing
$\alpha^R$ arbitrarily.  For the FLOP count we find:
\begin{align*}
    \flopop{\alpha'} & = 
        \sum_{i=1}^p (p + 1 + \vdeg[G^+]{\alpha^R(i)})^2
        + \sum_{i=1}^{p+q} i^2\\
    & = \sum_{v \in P} \vdeg[G^+]{v}^2
        + 2(p+1) \sum_{v \in P} \vdeg[G^+]{v}
        + p(p+1)^2 + \sum_{i=1}^{p+q} i^2\\
    & = \qcc(F) + p(p+1)^2 + \sum_{i=1}^{p+q} i^2.
\end{align*}

Since the FLOP count does not depend on the particular PEO $\alpha'$
for $C^+$, the FLOP count induced by the triangulation $F'$ is less
than $k'$ if and only if the quadratic chain completion cost of $F$ is
less than $k$.
\end{proof}

If we would omit the vertices $U$ from the construction of $C(G)$,
the vertex degrees \eqref{eqn:elim_degree_qcc} would depend on the
position of the vertices in the ordering $\alpha$.  The implied
quadratic cost function for the chain completion problem would make
the treatment that follows much more difficult.

\subsubsection{Reduction from optimal quadratic arrangement}
\label{sec:reduction_from_oqa}

In section \ref{sec:OQA} we have shown that OQA($c$) is an NP hard
problem for any choice of the polynomial $c$ in
\eqref{eqn:c_polynomial}.  For the rest of the section we are
interested only in the special case OQA($2(X^2 +1)$), which we
reduce to the QCC problem.  This polynomial is intentionally chosen to
match up with the $2(p+1)$ factor in the formulation of the QCC
problem.

\begin{figure}[t]
    \begin{center}
        \includegraphics{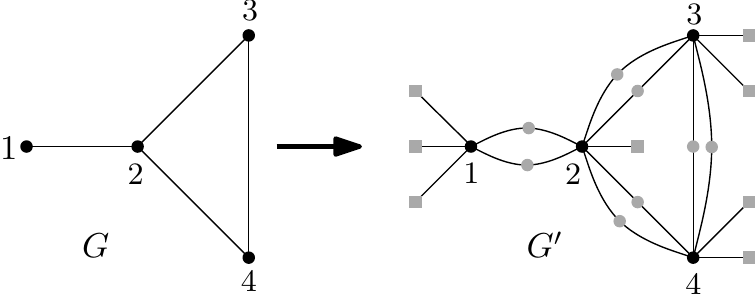}
    \end{center}
    \caption{Illustration for the reduction from OQA to QCC.  The gray
    vertices of $G'$ form the partition $Q$.  The vertices marked as
    squares correspond to the sets $R(v)$, and the gray discs correspond to the
    vertices $w_i^e$.\label{fig:oqa_to_qcc}}
\end{figure}

The following construction for creating a bipartite graph $G' = (P, Q,
E')$ from a given graph $G=(V,E)$ on $n$ vertices is used
in~\cite[Lemma 3]{Yannakakis:1981}.  For a vertex $v \in V$ define
the set $R(v) \defby \{w^v_1, \dotsc, w^v_{l_v} \setcond l_v = n -
\vdeg[G]{v} \}$, then $G'$ is given by (see Fig.~\ref{fig:oqa_to_qcc})
\begin{equation}
\label{eqn:def_g_prime}
\begin{split}
P  & = V,\quad
Q    = \{w^e_1, w^e_2 \setcond e \in E \} \bigcup_{v\in V} R(v)\quad\text{and}\\
E' & = \{ (u, w^e_i) \setcond e \in E, u \in V, e \in \ecut{u},
    1 \le i \le 2 \}\\
& \quad \cup \{ (v, w) \setcond v \in V, w \in R(v)\}.
\end{split}
\end{equation}

The construction of $G'$ is such that all inclusion minimal chain
completions can be easily characterized from vertex orderings of
$G$, as the next lemma shows.
\begin{lemma}[extracted from {\cite[Lemma 3]{Yannakakis:1981}}]
\label{lemma:orderings_chaincompletions}
Let $\alpha : V \rightarrow \nset{n}$ be an ordering for the vertices of $G
= (V,E)$ and for $w \in Q$, define $\sigma(w) = \max \{i \setcond (w,
\alpha^{-1}(i)) \in E' \}$. Then
\begin{equation}
H(\alpha) = \{(\alpha^{-1}(j), w) \setcond w \in Q, j < \sigma(w) \}
    \setminus E' \subseteq P \times Q
\end{equation}
is a set of edges whose addition to $G'$ yields a bipartite chain
graph with chain ordering $\alpha$ for $P$.  Moreover, for any minimal
set of edges $F$ such that $(P,Q,E'\cup F)$ is a bipartite chain graph
with $P$-ordering $\alpha$, we have $F = H(\alpha)$.
\end{lemma}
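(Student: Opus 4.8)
The plan is to characterise, for every $w \in Q$, its neighbourhood in the augmented graph and to read off the chain structure directly. The key observation is that $\sigma(w)$ is the largest $\alpha$-position attained by any original neighbour of $w$, and that every $w \in Q$ indeed has at least one neighbour in $E'$ (each $w^e_i$ is joined to the two endpoints of $e$, and each $w \in R(v)$ is joined to $v$), so $\sigma(w)$ is well defined. In $G'^+ \defby (P, Q, E' \cup H(\alpha))$ the vertex $w$ is then adjacent to exactly $\alpha^{-1}(1), \dotsc, \alpha^{-1}(\sigma(w))$: the edge to $\alpha^{-1}(\sigma(w))$ lies in $E'$ by the definition of $\sigma$, each edge to an earlier position is supplied by $H(\alpha)$ (or is already present in $E'$), and by maximality of $\sigma(w)$ no $E'$-edge reaches a later position.

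First I would prove that $G'^+$ is a bipartite chain graph admitting $\alpha$ as a $P$-ordering. From the description above, $\alpha^{-1}(i) \in \nhd[G'^+]{w}$ if and only if $i \le \sigma(w)$, whence $\nhd[G'^+]{\alpha^{-1}(i)} = \{w \in Q \setcond \sigma(w) \ge i\}$. Since $\{w \setcond \sigma(w) \ge i\} \supseteq \{w \setcond \sigma(w) \ge i+1\}$, the neighbourhoods are nested, that is $\nhd[G'^+]{\alpha^{-1}(i)} \supseteq \nhd[G'^+]{\alpha^{-1}(i+1)}$ for $1 \le i \le n-1$, which is precisely the defining property of a chain graph for the ordering $\alpha$.

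For the \emph{moreover} part I would take any edge set $F$ for which $(P, Q, E' \cup F)$ is a chain graph with $P$-ordering $\alpha$ and argue that $H(\alpha) \subseteq F$. Fix $w \in Q$. As $E' \subseteq E' \cup F$, the edge $(\alpha^{-1}(\sigma(w)), w)$ survives, so $w \in \nhd{\alpha^{-1}(\sigma(w))}$ in $(P, Q, E' \cup F)$; the chain containments $\nhd{\alpha^{-1}(i)} \supseteq \dotsb \supseteq \nhd{\alpha^{-1}(\sigma(w))}$ for $i \le \sigma(w)$ then force $w$ to be adjacent to every $\alpha^{-1}(i)$ with $i \le \sigma(w)$. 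The edges at positions $j < \sigma(w)$ that do not already belong to $E'$ are exactly the edges of $H(\alpha)$ incident to $w$, and since $H(\alpha) \cap E' = \emptyset$ they must lie in $F$; ranging over all $w$ gives $H(\alpha) \subseteq F$. Because the first part shows that $E' \cup H(\alpha)$ is itself a chain graph with ordering $\alpha$, inclusion-minimality of $F$ then upgrades $H(\alpha) \subseteq F$ to the equality $F = H(\alpha)$.

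The argument is essentially bookkeeping over the explicit structure of $G'$, so I do not anticipate a genuinely hard step; the single point to handle carefully is the \emph{direction} of the nesting, reading $\nhd{\alpha^{-1}(i)} \supseteq \nhd{\alpha^{-1}(i+1)}$ as ``earlier vertices have larger neighbourhoods''. It is exactly this monotonicity that lets the surviving $E'$-edge at the maximal position $\sigma(w)$ propagate downward and fill in all earlier positions, which is the crux of both the existence and the uniqueness claims. Overall this is a transcription of Yannakakis' Lemma~3 into the present notation.
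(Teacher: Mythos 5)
Your proof is correct. The paper itself states this lemma without proof, attributing it to Lemma~3 of Yannakakis, so there is no in-paper argument to compare against; your verification --- computing $\nhd[G'^+]{w}=\{\alpha^{-1}(1),\dotsc,\alpha^{-1}(\sigma(w))\}$, reading off the nested neighbourhoods $\nhd{\alpha^{-1}(i)}=\{w\in Q\setcond \sigma(w)\ge i\}$, and then using the surviving $E'$-edge at position $\sigma(w)$ together with the chain containments to force $H(\alpha)\subseteq F$ and hence $F=H(\alpha)$ by minimality --- is exactly the standard argument the citation points to, including the correct orientation of the nesting and the observation that every $w\in Q$ has a neighbour so $\sigma(w)$ is well defined.
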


\begin{theorem}
Let $c = 2(X^2 + 1)$, then OQA(c) $\propto$ QCC.
\end{theorem}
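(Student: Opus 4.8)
The plan is to feed an \probname{OQA}$(c)$ instance into exactly the bipartite construction~\eqref{eqn:def_g_prime} of Yannakakis, but to designate the \emph{large} partition as the cost-bearing one. Concretely, given an instance $(G=(V,E);k)$ of \probname{OQA}$(2(X^2+1))$ with $\card{V}=n$, I would build $G'=(P,Q,E')$ as in~\eqref{eqn:def_g_prime}, so that $P=V$ and $Q$ consists of the two edge-vertices $w_1^e,w_2^e$ per edge together with the padding sets $R(v)$. A short count gives $\card{Q}=2\card{E}+\sum_{v}(n-\vdeg[G]{v})=n^2$, and this is the whole point of the chosen polynomial: $2(\card{Q}+1)=2(n^2+1)=c(n)$, so the coefficient $2(p+1)$ in the \probname{QCC} cost is forced to coincide with the linear part of $f(x)=x^2+c(n)x$ precisely when $Q$ is the designated partition. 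I would therefore output the \probname{QCC} instance $(G';\,k'\defby k+\kappa)$ in which $Q$ is designated, for a constant $\kappa=\kappa(n)$ fixed below.

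By Lemma~\ref{lemma:orderings_chaincompletions} the minimal chain completions of $G'$ are exactly the sets $H(\alpha)$ as $\alpha$ ranges over the orderings $\alpha:V\rightarrow\nset{n}$, and since enlarging a completion only raises every $Q$-degree (hence the $\qcc$ value), it suffices to consider minimal completions. For a fixed $\alpha$ I would read off the $Q$-side degrees in $G^+=(P,Q,E'\cup H(\alpha))$: each $w\in Q$ is joined precisely to $\alpha^{-1}(1),\dots,\alpha^{-1}(\sigma(w))$, where $\sigma(w)=\max\{i\setcond(w,\alpha^{-1}(i))\in E'\}$ as in Lemma~\ref{lemma:orderings_chaincompletions}; thus $\vdeg[G^+]{w}=\sigma(w)$, with $\sigma(w)=\alpha(v)$ for $w\in R(v)$ and $\sigma(w)=\max(\alpha(u),\alpha(v))$ for an edge-vertex of $e=(u,v)$.

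The heart of the proof is then an exact cost identity. With $Q$ designated, the objective is $\qcc(H(\alpha))=\sum_{w\in Q}\sigma(w)^2+c(n)\sum_{w\in Q}\sigma(w)$. Using the layer-cake identity $\sigma^2=\sum_{t=1}^{\sigma}(2t-1)$ I would rewrite this as $\sum_{t=1}^{n}(2t-1+c(n))\,\card{\{w\in Q\setcond\sigma(w)\ge t\}}$. The inner count is just the $P$-degree $\vdeg[G^+]{\alpha^{-1}(t)}$, and a direct cut computation (splitting $Q$ into padding and edge-vertices as above) gives $\card{\{w\setcond\sigma(w)\ge t\}}=n(n-t+1)+\card{\ecut{X_{t-1}}}$, with $X_{t-1}=\{v\setcond\alpha(v)\le t-1\}$. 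Substituting and isolating the $\alpha$-independent part yields $\qcc(H(\alpha))=\kappa+\sum_{k=1}^{n-1}(2k+1+c(n))\,\card{\ecut{X_k}}$, where $\kappa=\sum_{t=1}^{n}(2t-1+c(n))\,n(n-t+1)$ depends only on $n$. Finally the same layer-cake applied to $f$ shows, for $e=(u,v)$, that $\phi_\alpha(e)=\sum_{k}[\alpha(u)\le k<\alpha(v)]\,(f(k{+}1)-f(k))$, and since $f(k+1)-f(k)=2k+1+c(n)$ this gives $\quadcost(\alpha)=\sum_{k=1}^{n-1}(2k+1+c(n))\,\card{\ecut{X_k}}$. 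Hence $\qcc(H(\alpha))=\quadcost(\alpha)+\kappa$ exactly.

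With this identity the reduction closes immediately: the bijection $\alpha\mapsto H(\alpha)$ identifies arrangements of $G$ with minimal chain completions of $G'$, and the two costs differ by the fixed constant $\kappa$, so $G$ admits an arrangement of quadratic cost $\le k$ if and only if $G'$ admits a chain completion of $\qcc$-cost $\le k'=k+\kappa$. Since $G'$ and $\kappa$ are computable in polynomial time, this establishes $\probname{OQA}(c)\propto\probname{QCC}$. The step I expect to be the main obstacle — and where the specific polynomial $c=2(X^2+1)$ earns its keep — is getting the bookkeeping of the identity right, in particular recognizing that one must place the cost on $Q$ (so that $2(\card{Q}+1)=c(n)$) rather than on $P$; with the wrong partition the matching fails both by a factor of $n$ in the position weights and by a spurious quadratic term in the cut sizes, whereas on $Q$ the $Q$-degrees are literally the $\sigma$-values and the layer-cake makes both sides transparently equal.
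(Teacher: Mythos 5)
Your reduction is correct and is essentially the paper's own: the same construction \eqref{eqn:def_g_prime}, the same choice of $Q$ as the designated partition so that $2(\card{Q}+1)=2(n^2+1)=c(n)$, and the same additive constant (your $\kappa$ equals the paper's $p(n)=\frac{1}{6}n^2(n+1)(2n+3c(n)+1)$). The only difference is how the identity $\qcc(H(\alpha))=\quadcost(\alpha)+\kappa$ is verified --- you use a layer-cake/cut-counting decomposition of both sides into weighted cut sizes $\card{\ecut{X_k}}$, while the paper regroups the degree sums directly by adding and subtracting $\sum_{(u,v)\in E}(\alpha(u)^2+c(n)\alpha(u))$; both computations are valid.
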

\begin{proof}
Let $(G = (V,E); k)$ be an instance of OQA with $\card{V} = n, \card{E} =
m$.  Let $G'$ be constructed as in \eqref{eqn:def_g_prime}.  We define
an instance for QCC by $(G'; k+p(n))$, where $p(n) = \frac{1}{6} n^2
(n+1) (2n+3c(n)+1)$, and regard $Q$ as the designated partition for the
decision problem.  For the number of vertices in $Q$ we find
\begin{equation*}
    \card{Q} = 2m + \sum_{v \in V} \card{R(v)}
        = 2m + \sum_{v \in V} n - \vdeg[G]{v}
        = 2m + n^2 - 2m = n^2.
\end{equation*}
By Lemma \ref{lemma:orderings_chaincompletions}, we only need to
relate the quadratic ordering cost of an arbitrary vertex ordering
$\alpha : V \rightarrow \nset{n}$ for $G$ to the quadratic chain completion
cost for $H(\alpha)$ for $G'$.  Set $G'^+ = (P,Q,E' \cup H(\alpha))$
and assume for all edges $e = (u,v) \in E$ that we have
$\alpha(u) < \alpha(v)$.  For every vertex $w^e_i \in Q$, we have
$\vdeg[G'^+]{w^e_i} = \alpha(v)$.  For any $v \in V$ we have
$\vdeg[G'^+]{w} = \alpha(v)$ for all vertices $w \in R(v)$.
We abbreviate $l_v \defby n - \vdeg[G]{v}$ and find for the total
quadratic chain completion cost:
\begin{align*}
    & \qcc(H(\alpha))
        = \sum_{w \in Q} (\vdeg[G'^+]{w}^2
        + \underbrace{2(n^2 + 1)}_{=c(n)} \vdeg[G'^+]{w})\\
    & = 2 \sum_{(u,v) \in E} (\alpha(v)^2 + c(n) \alpha(v))
        + \sum_{v \in V} \sum_{x \in R(v)} (\alpha(v)^2 + c(n) \alpha(v))
        \displaybreak[1]\\
    & = 2 \sum_{(u,v) \in E} (\alpha(v)^2 + c(n) \alpha(v))
        + \sum_{v \in V} (n-\vdeg[G]{v})(\alpha(v)^2 + c(n) \alpha(v))\\
    & \quad + \sum_{(u,v) \in E} (\alpha(u)^2 + c(n) \alpha(u))
        - \sum_{(u,v) \in E} (\alpha(u)^2 + c(n) \alpha(u))
        \displaybreak[1]\\
    & = \sum_{(u,v) \in E}
        (\alpha(v)^2 - \alpha(u)^2 + c(n) (\alpha(v) - \alpha(u)))\\
    & \quad + \sum_{(u,v) \in E}
            (\alpha(v)^2 + \alpha(u)^2 + c(n)(\alpha(u) + \alpha(v))
        + \sum_{v \in V} l_v(\alpha(v)^2 + c(n) \alpha(v))
        \displaybreak[1]\\
    & = \quadcost(\alpha) + \sum_{v \in V} \vdeg[G]{v}
        (\alpha(v)^2 + c(n) \alpha(v))
        + \sum_{v \in V} (n-\vdeg[G]{v})(\alpha(v)^2 + c(n) \alpha(v))
        \displaybreak[1]\\
    & =  \quadcost(\alpha) + n \sum_{v \in V} (\alpha(v)^2 + c(n) \alpha(v))
        = \quadcost(\alpha) + p(n).
\end{align*}
This shows $\quadcost(\alpha) \le k \Leftrightarrow \qcc(H(\alpha))
\le k + p(n)$, which completes the proof.
\end{proof}

Taken together, the three reductions in this section imply that it is
NP hard to minimize the number of arithmetic operations in the
sparse Cholesky factorization.

\section{Conclusions and future work}

In this work we have shown by means of an explicit, scalable
construction that minimum fill and minimum operation count for the
sparse Cholesky factorization are not achievable simultaneously in
general.  We proved that minimizing the number of arithmetic
operations is just as difficult as minimizing the fill: it is NP
hard.  While this result is not surprising, no proof has been given so
far, and thus our findings close a gap in the theoretical body of
sparse direct methods.

It would be of interest to understand how well optimal fill orderings
\emph{approximate} the optimal number of arithmetic operations (and
vice versa).  Approximation bounds based on general equivalence
constants for the 1- and 2-norm or bounds based on full $k$-tree
embeddings (e.g.  \cite[prop.  3]{Rose:1972}) are too coarse for
offering an quantitative insight into this question.

\paragraph*{Acknowledgements} We would like to thank Tim Davis and
John Gilbert for inspiring discussions at a Dagstuhl workshop which
encouraged this work.  We are also grateful to two anonymous
referees for their useful suggestions.

Research at Lawrence Berkeley National Laboratory was
supported by the Office of Advanced Scientific Computing Research of
the US Department of Energy under contract number DE-AC02-05CH11231.

\bibliographystyle{plain}
\bibliography{fillflop}

\begin{thebibliography}{10}

\bibitem{AgrawalKleinRavi:1993}
Ajit Agrawal, Philip Klein, and R.~Ravi.
\newblock Cutting down on fill using nested dissection: provably good
  elimination orderings.
\newblock In {\em Graph theory and sparse matrix computation}, volume~56 of
  {\em IMA Vol. Math. Appl.}, pages 31--55. Springer, New York, 1993.

\bibitem{Amestoy:1996}
Patrick~R. Amestoy, Timothy~A. Davis, and Iain~S. Duff.
\newblock An approximate minimum degree ordering algorithm.
\newblock {\em SIAM J. Matrix Anal. Appl.}, 17(4):886--905, 1996.

\bibitem{AmestoyDavisDuff:2004}
Patrick~R. Amestoy, Timothy~A. Davis, and Iain~S. Duff.
\newblock Algorithm 837: {AMD}, an approximate minimum degree ordering
  algorithm.
\newblock {\em ACM Trans. Math. Software}, 30(3):381--388, 2004.

\bibitem{ArnborgEtal:1987}
Stefan Arnborg, Derek~G. Corneil, and Andrzej Proskurowski.
\newblock Complexity of finding embeddings in a {$k$}-tree.
\newblock {\em SIAM J. Algebraic Discrete Methods}, 8(2):277--284, 1987.

\bibitem{BodlaenderEtal:1995}
Hans~L. Bodlaender, John~R. Gilbert, Hj{\'a}lmt{\'y}r Hafsteinsson, and Ton
  Kloks.
\newblock Approximating treewidth, pathwidth, frontsize, and shortest
  elimination tree.
\newblock {\em J. Algorithms}, 18(2):238--255, 1995.

\bibitem{DavisHu:2011}
Timothy~A. Davis and Yifan Hu.
\newblock The {U}niversity of {F}lorida sparse matrix collection.
\newblock {\em ACM Trans. Math. Software}, 38(1):Art. 1, 25, 2011.

\bibitem{DuffReid:1983:mf}
I.~S. Duff and J.~K. Reid.
\newblock The multifrontal solution of indefinite sparse symmetric linear
  equations.
\newblock {\em ACM Trans. Math. Software}, 9(3):302--325, 1983.

\bibitem{DuffReid:1983:work}
I.~S. Duff and J.~K. Reid.
\newblock A note on the work involved in no-fill sparse matrix factorization.
\newblock {\em IMA J. Numer. Anal.}, 3(1):37--40, 1983.

\bibitem{Even:1979}
Shimon Even.
\newblock {\em Graph algorithms}.
\newblock Computer Science Press Inc., Woodland Hills, Calif., 1979.
\newblock Computer Software Engineering Series.

\bibitem{GareyJohnson:1979}
Michael~R. Garey and David~S. Johnson.
\newblock {\em Computers and intractability}.
\newblock W. H. Freeman and Co., San Francisco, Calif., 1979.
\newblock A guide to the theory of NP-completeness, A Series of Books in the
  Mathematical Sciences.

\bibitem{GeorgeLiu:1989}
Alan George and Joseph W.~H. Liu.
\newblock The evolution of the minimum degree ordering algorithm.
\newblock {\em SIAM Rev.}, 31(1):1--19, 1989.

\bibitem{GeorgePothen:1997}
Alan George and Alex Pothen.
\newblock An analysis of spectral envelope reduction via quadratic assignment
  problems.
\newblock {\em SIAM J. Matrix Anal. Appl.}, 18(3):706--732, 1997.

\bibitem{Heggernes:2006}
Pinar Heggernes.
\newblock Minimal triangulations of graphs: a survey.
\newblock {\em Discrete Math.}, 306(3):297--317, 2006.

\bibitem{KarypisKumar:1998}
George Karypis and Vipin Kumar.
\newblock A fast and high quality multilevel scheme for partitioning irregular
  graphs.
\newblock {\em SIAM J. Sci. Comput.}, 20(1):359--392 (electronic), 1998.

\bibitem{Kloks:1994}
Ton Kloks.
\newblock {\em Treewidth}, volume 842 of {\em Lecture Notes in Computer
  Science}.
\newblock Springer-Verlag, Berlin, 1994.
\newblock Computations and approximations.

\bibitem{Liu:1985}
Joseph W.~H. Liu.
\newblock Modification of the minimum-degree algorithm by multiple elimination.
\newblock {\em ACM Trans. Math. Software}, 11(2):141--153, 1985.

\bibitem{Liu:1992}
Joseph W.~H. Liu.
\newblock The multifrontal method for sparse matrix solution: theory and
  practice.
\newblock {\em SIAM Rev.}, 34(1):82--109, 1992.

\bibitem{NatanzonEtal:2000}
Assaf Natanzon, Ron Shamir, and Roded Sharan.
\newblock A polynomial approximation algorithm for the minimum fill-in problem.
\newblock {\em SIAM J. Comput.}, 30(4):1067--1079 (electronic), 2000.

\bibitem{NgRaghavan:1999}
Esmond~G. Ng and Padma Raghavan.
\newblock Performance of greedy ordering heuristics for sparse {C}holesky
  factorization.
\newblock {\em SIAM J. Matrix Anal. Appl.}, 20(4):902--914 (electronic), 1999.
\newblock Sparse and structured matrices and their applications (Coeur d'Alene,
  ID, 1996).

\bibitem{Papadimitriou:1976}
Ch.~H. Papadimitriou.
\newblock The {NP}-completeness of the bandwidth minimization problem.
\newblock {\em Computing}, 16(3):263--270, 1976.

\bibitem{MehtaSahni:2005}
Alex Pothen and Sivan Toledo.
\newblock {\em Handbook of data structures and applications}, chapter~59.
\newblock Chapman \& Hall/CRC Computer and Information Science Series. Chapman
  \& Hall/CRC, Boca Raton, FL, 2005.

\bibitem{Rose:1972}
Donald~J. Rose.
\newblock A graph-theoretic study of the numerical solution of sparse positive
  definite systems of linear equations.
\newblock In {\em Graph theory and computing}, pages 183--217. Academic Press,
  New York, 1972.

\bibitem{RothbergEisenstat:1998}
Edward Rothberg and Stanley~C. Eisenstat.
\newblock Node selection strategies for bottom-up sparse matrix ordering.
\newblock {\em SIAM J. Matrix Anal. Appl.}, 19(3):682--695 (electronic), 1998.

\bibitem{Yannakakis:1981}
Mihalis Yannakakis.
\newblock Computing the minimum fill-in is {NP}-complete.
\newblock {\em SIAM J. Algebraic Discrete Methods}, 2(1):77--79, 1981.

\end{thebibliography}

\opt{arxiv}{\newpage

\begin{appendix}

\section{OQA(c) and OLA are different}
\label{app:oqa_vs_ola}

We show that OLA and OQA are different problems in the sense that
optimizing the linear arrangement cost does not necessarily optimize
the quadratic arrangement cost (and vice versa).  Let $n>4$, $C$ a set
of size $n$, $u,v \in C$ two distinct elements and consider the
following class of graphs $G= (V,E)$ (see Fig.
\ref{fig:two_antennas}):
\begin{equation*}
    V = C \cup \{x,y\}, \quad
    E = C\times C \cup \{(x, u), (y, v)\}
\end{equation*}

\begin{figure}
    \begin{center}
        \includegraphics[width=.3\textwidth]{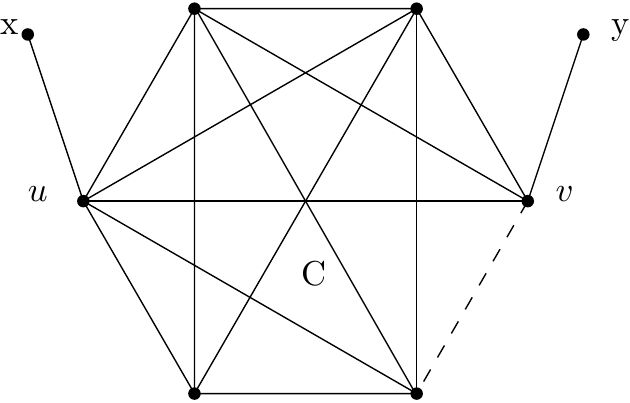}
        \hfill
        \includegraphics[width=.6\textwidth]{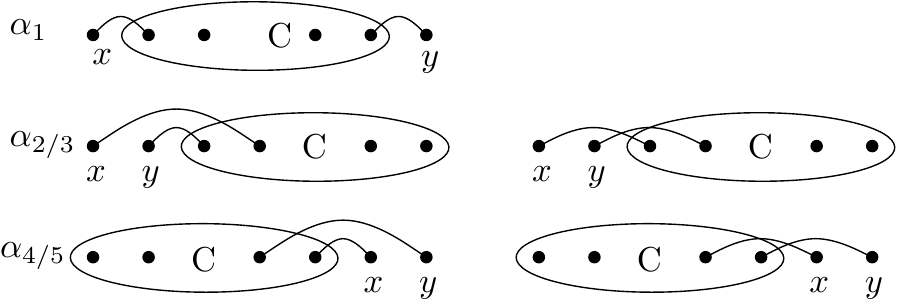}
        \caption{A graph where minimum quadratic- and linear
        arrangement costs are attained on distinct
        orderings.\label{fig:two_antennas}}
    \end{center}
\end{figure}

It is easy to see that any linear or quadratic arrangement where $x$
or $y$ intersperses with the vertices of $C$ is suboptimal.  If $x,y$
are ordered before or after $C$, any ordering that does not place $u,
v$ as close as possible to $x,y$ is suboptimal, too.  Ruling out those
suboptimal orderings, only five different orderings (modulo
cost-neutral rearrangements of $C$) remain; they are displayed in Fig.
\ref{fig:two_antennas} on the right.  We calculate the linear
arrangement costs:
\begin{equation*}
    \lincost(\alpha_1)     = \frac{1}{6} n(n^2 -1) + 2,\quad
    \lincost(\alpha_{2/3}) = \lincost(\alpha_{4/5}) =\frac{1}{6} n(n^2 -1) + 4,
\end{equation*}
so $\alpha_1$ is an optimal linear arrangement while the others are
not.  Using Lemma \ref{lemma:quadcost_clique_trans} we find the
quadratic arrangement costs
\begin{align*}
    \quadcost(\alpha_1)    & = \frac{1}{6} n(n^2-1)(c + n+3) + 2n + c + 3,\\
    \quadcost(\alpha_{2/3})& = \frac{1}{6} n(n^2-1)(c + n+5) + 4c + 20,\\
    \quadcost(\alpha_{4/5})& = \frac{1}{6} n(n^2-1)(c + n+1) + 8n + 4c + 4.
\end{align*}
It is easy to see that $\quadcost(\alpha_{4/5})$ is strictly less than
the other costs for sufficiently large $n$ (recall that $c$ is
fixed).  Thus OQA and OLA are different problems for every
polynomial $c$.

\newpage

\section{Moving isolated vertices to the left}
\label{app:ex_move_left}

In the reduction from \probname{MaxCut} to OQA(c), we needed to
rearrange isolated vertices within a given ordering without decreasing
the costs.  Fig. \ref{fig:no_single_left} shows an arrangement of a
graph on 8 vertices, of which one is isolated.  If the isolated vertex
is moved to the left so that it intersperses with the largest cut, 
the arrangement cost decreases.  This is why we need to resort to
rearranging blocks of isolated vertices.

\begin{figure}
    \begin{center}
        \includegraphics[width=.6\textwidth]{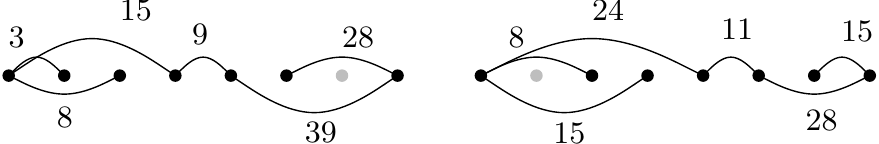}
\caption{Moving an isolated vertex to the left into the largest cut
may decrease the total arrangement cost.  The numbers shown next to
the edges are the edge costs; the arrangement on the left has a total
cost of 102 while the arrangement on the right has cost 101.  Here the
distance function is $f(x) = x^2$.
\label{fig:no_single_left}}
    \end{center}
\end{figure}

\newpage

\section{Auxiliary proofs}
\label{app:proof_quadcost_clique_trans}

\paragraph{Proof of Lemma \ref{lemma:quadcost_clique_trans}}

We shall first proof the lemma with $r=0$.
We say that the vertices $u,v$ have ordering distance $d$,  $1 \le d
\le s-1$, if $|\alpha(u) - \alpha(v)| = d$.  The cost implied by all
edges between vertices of ordering distance $d$ is
\begin{equation*}
\begin{split}
    & \quad \sum_{k=1}^{s-d} \left( (k+d)^2 + c(k+d) - k^2 - ck \right)
      = 2d\sum_{k=1}^{s-d} k + d(d + c) (s-d)\\
    & = d(s - d)(s - d + 1) + d(d + c)(s - d)
      = d (c + s + 1)(s - d).
\end{split}
\end{equation*}
The total cost of $\alpha$ is the sum
over all the distances $1\le d\le
s-1$, so we find
\begin{equation*}
    \begin{split}
    \quadcost(\alpha)
        & = \sum_{d=1}^{s-1} d(c+s+1)(s-d)
        = (c+s+1) (s \sum_{d=1}^{s-1} d - \sum_{d=1}^{s-1} d^2)\\
        & = (c+s+1)(\frac{1}{2} s^2(s-1) - \frac{1}{6} s (s-1) (2s -1))\\
        & = \frac{1}{6} s (s^2-1) (c + s + 1).
\end{split}
\end{equation*}

Application of the translation lemma to the above cost finishes
the proof.
\qed

\end{appendix}

}

\end{document}